\newtheorem{theorem}{Theorem}[section]
\newtheorem*{property*}{Property}
\newtheorem*{definition}{Definition}
\newtheorem{example}[theorem]{Example}
\newtheorem{proposition}[theorem]{Proposition}
\newtheorem{corollary}[theorem]{Corollary}
\newtheorem{lemma}[theorem]{Lemma}
\theoremstyle{remark}
\newtheorem*{example*}{Example}
\newtheorem*{remark*}{Remark}
\begin{document}
\newcommand{\set}[1]{\left\{{#1}\right\}}
\newcommand{\Aut}{\operatorname{Aut}}
\newcommand{\dbar}{\overline{\partial}}
\newcommand{\paren}[1]{\left( {#1}\right)}
\newcommand{\expi}[1]{e^{2\pi i {#1}}}
\newcommand{\I}{[0,1]}
\newcommand{\R}{\mathbb{R}}
\newcommand{\Z}{\mathbb{Z}}
\newcommand{\C}{\mathbb{C}}
\newcommand{\B}{\mathbb{B}}
\newcommand{\N}{\mathbb{N}}
\newcommand{\hol}{\mathcal{O}}
\newcommand{\ci}{C^{\infty}}

\renewcommand{\div}{\operatorname{div}}
\newcommand{\SU}{\operatorname{SU}}
\newcommand{\SL}{\operatorname{SL}}
\newcommand{\GL}{\operatorname{GL}}
\newcommand{\Hol}{\operatorname{Hol}}
\newcommand{\Cont}{\operatorname{Cont}}
\newcommand{\Lie}{\operatorname{Lie}}
\newcommand{\Span}{\operatorname{Span}}
\newcommand{\Vect}{\operatorname{Vec}}
\title[An Oka Principle for a Transitivity Property]{An Oka Principle for a Parametric Infinite Transitivity Property}
\author{Frank Kutzschebauch}
\address{Dpt. of Mathematics\\
University of Bern\\
Sidlerstrasse 5, CH-3012. Bern, Switzerland. }
\email{frank.kutzschebauch@math.unibe.ch}
\author{Alexandre Ramos-Peon}
\address{Dpt. of Mathematics\\
University of Bern\\
Sidlerstrasse 5, CH-3012. Bern, Switzerland. }
\email{alexandre.ramos@math.unibe.ch}
\date{\today}
{\renewcommand{\thefootnote}{} \footnotetext{2000
\textit{Mathematics Subject Classification.} 32M05, 32M25.

Both authors partially supported by Schweizerischer Nationalfond Grant  200021-140235/1
}
\renewcommand{\thefootnote}{\arabic{footnote}}

\begin{abstract}
It is an elementary fact that the action by holomorphic automorphisms on $\C^n$ is infinitely transitive, 
i.e., $m$-transitive for any $m\in \N$. The same holds on any Stein manifold with the holomorphic density property $X$.
We study a parametrized case: we consider $m$ points on $X$ parametrized by a Stein manifold $W$, 
and seek a family of automorphisms of $X$, parametrized by $W$, putting them into a standard form which does not depend on the parameter.
This general transitivity is shown to enjoy an Oka principle, to the effect that the obstruction to a holomorphic solution
is of a purely topological nature. In the presence of a volume form and of a corresponding density property, similar results for volume-preserving automorphisms are obtained.
\end{abstract}
\maketitle
\section{Introduction}
Let $X$ and $W$ be complex manifolds. 
Let  $Y_{X,N}$ be  the configuration space of ordered $N$-tuples of points in $X$: $Y_{X,N}=X^N\setminus \Delta$, 
where 
\[
\Delta=\set{(z^{1},\dots,z^{N})\in X^N;z^{i} = z^{j} \text{ for some } i  \ne j} 
\]
is the diagonal. Consider a holomorphic map $x:W\to Y_{X,N}$, that is, 
$N$ holomorphic maps $x^j:W\to X$ such that for each $w\in W$, the $N$ points $x^1(w),\dots,x^N(w)$ are pairwise distinct. Interpreting $x:W\to Y_{X,N}$ as a parametrized collection of points, 
 the following property can be thought of as a strong type of $N$-transitivity. 
\begin{definition}
Assume that $\Aut(X)$ acts $N$-transitively on $X$. Fix $N$ pairwise distinct points $z^1,\dots,z^N$ in X. 
We say that the parametrized points $x^1,\dots,x^N$ are \emph{simultaneously standardizable} if there exists a ``parametrized automorphism''  $\alpha\in\Aut_{W}(X)$, where 
\[\Aut_{W}(X)=\{ \alpha\in\Aut(W\times X); \alpha(w,z)=(w,\alpha^w(z))\},\]
with 
\[\alpha^w(x^j(w))=z^j\]
for all $w\in W$ and $j=1,\dots,N$. 
\end{definition}
By the transitivity assumption, the definition does not depend on the choice of the $z^j$'s. 
Note also that in this paper's context, automorphisms are always \emph{holomorphic automorphisms} in the sense that we are not interested in the algebraic category.

This notion was introduced by the first author and S. Lodin in \cite{KL13},  
where it is shown that for $X=\C^n$ and $W=\C^k$, if $k<n-1$, then any collection of parametrized points $W\to Y_{X,N}$ is simultaneously standardizable. 
Our main result is the following.
\begin{theorem}
\label{thm:main}
Let $W$ be a Stein manifold and $X$ a Stein manifold with the holomorphic density property. Let $N$ be a natural number and 
$x:W\to Y_{X,N}$ be a holomorphic map. Then the parametrized points $x^{1},\dots,x^{N}$ are simultaneously standardizable by an automorphism 
lying in the path-connected component of the identity $(\Aut_W(X))^0$ of $\Aut_W(X)$ if and only if $x$ is null-homotopic.
\end{theorem}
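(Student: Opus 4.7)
\emph{Necessity.} If $\alpha\in(\Aut_W(X))^0$ standardizes $x$, I would pick a continuous path $(\alpha_t)_{t\in[0,1]}$ in $\Aut_W(X)$ from the identity to $\alpha$, and consider
\[
H:W\times[0,1]\to Y_{X,N},\qquad H(w,t)=\bigl(\alpha_t^w(x^1(w)),\dots,\alpha_t^w(x^N(w))\bigr).
\]
Since each $\alpha_t^w$ is injective, the values stay in $Y_{X,N}$, and $H$ is a homotopy from $x$ at $t=0$ to the constant configuration $(z^1,\dots,z^N)$ at $t=1$. Hence $x$ is null-homotopic.

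\emph{Sufficiency, step 1 (continuous standardization).} The natural object is the orbit/evaluation map
\[
\mathrm{ev}_N:\Aut(X)\to Y_{X,N},\qquad \alpha\mapsto\bigl(\alpha(z^1),\dots,\alpha(z^N)\bigr),
\]
which is surjective by the $N$-transitivity hypothesis. My first step would be to argue that $\mathrm{ev}_N$ is a topological Serre fibration: local sections should be furnished by the density property, which near any given configuration provides an automorphism carrying it to $(z^1,\dots,z^N)$ as a finite composition of flows of complete holomorphic vector fields, depending continuously on the configuration through the time parameters. Granting this, the null-homotopy of $x$ lifts — starting from the trivial lift $w\mapsto\mathrm{id}$ of the constant map to $(z^1,\dots,z^N)$ — to a continuous $\beta:W\to\Aut(X)$ with $\mathrm{ev}_N\circ\beta=x$, and then $\gamma(w):=\beta(w)^{-1}$ is a continuous family of automorphisms with $\gamma(w)(x^j(w))=z^j$ for all $w\in W$ and $j$.

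\emph{Sufficiency, step 2 (promoting to holomorphic).} The substantive step is to deform $\gamma$ to a holomorphic family, i.e., to an element of $\Aut_W(X)$, while preserving the standardization. This is where the parametric Oka principle for $\Aut(X)$ on a Stein manifold with the density property enters: a continuous map from the Stein parameter space $W$ into $\Aut(X)$ is homotopic, through continuous families, to a holomorphic one, i.e., to an element of $\Aut_W(X)$. To maintain the $N$ interpolation conditions $\alpha^w(x^j(w))=z^j$ through the deformation, I would work relatively on a Stein exhaustion of $W$: approximate $\gamma$ on each compactum by a holomorphic family $\tilde\gamma$, then correct the small remaining error on the $N$ moving points by post-composing with a holomorphic family of automorphisms close to the identity produced by a parametric Andersén--Lempert spray that interpolates the prescribed point values. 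An iterative convergence scheme in the spirit of Forstneri\v{c} would then produce the exact holomorphic solution $\alpha\in\Aut_W(X)$. Since the whole construction is a concatenation of homotopies starting from the identity, the resulting $\alpha$ lies automatically in $(\Aut_W(X))^0$.

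\emph{Main obstacle.} The heart of the argument is the relative Oka step: coupling the parametric Oka principle for $\Aut(X)$ with the \emph{exact} fixing of the moving $N$-point configuration. This demands a parametric Andersén--Lempert theorem producing holomorphic sprays of automorphisms whose evaluation at $x^j(w)$ is controlled, and a convergence scheme that upgrades successive approximations into a genuine solution — the technical core of the theorem. The Serre-fibration claim for $\mathrm{ev}_N$, while morally routine from the density property, is the other point where care is needed, since $\Aut(X)$ is infinite-dimensional and the relevant topology has to be chosen so that local sections exist continuously.
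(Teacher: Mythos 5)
Your \emph{only if} direction is correct and matches the paper's (one sentence in the paper). Your \emph{if} direction, however, has a genuine gap at the center of Step~2: you propose to first lift the continuous null-homotopy through the evaluation map $\mathrm{ev}_N:\Aut(X)\to Y_{X,N}$ to obtain a continuous family $\gamma:W\to\Aut(X)$, and then invoke ``the parametric Oka principle for $\Aut(X)$'' to make $\gamma$ holomorphic. But such an Oka principle for the infinite-dimensional group $\Aut(X)$ is \emph{not} available for a general Stein manifold with the density property; indeed, the paper itself raises exactly the statement $[\Cont(W,\Aut(X))]\cong[\Hol(W,\Aut(X))]$ as an open question in Section~\ref{last}. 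The one case where it is known, $X=\C^n$, works only because $\Aut(\C^n)$ deformation retracts onto $\GL_n(\C)$, which is a finite-dimensional Oka manifold --- a fact with no analogue for general $X$. So as written, the crucial Step~2 assumes something strictly stronger than (in fact, implied by nothing short of) what you are trying to prove.

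The paper deliberately avoids ever lifting through $\Aut(X)$. It establishes instead that the \emph{finite-dimensional} configuration space $Y_{X,N}$ is an Oka--Forstneri\v c manifold (Theorem~\ref{YisOka}, via the spanning-by-complete-fields Lemma~\ref{spanning}), and uses this to upgrade the given continuous null-homotopy of $x$ to a smooth homotopy $x_t:W\to Y_{X,N}$ \emph{through holomorphic maps} (Corollary~\ref{Oka+smooth}). The holomorphic automorphism $\alpha\in(\Aut_W(X))^0$ is then built directly, without a continuous lift as intermediary, by an induction over an exhaustion $W=\bigcup L_j$, $X=\bigcup K_j$: at each step, a parametric Andersén--Lempert approximation (Theorem~\ref{AL}, Propositions~\ref{isotopy1}, \ref{tricky}) of the motion $t\mapsto x_t$ produces an automorphism correcting $x$ to within $\epsilon_j$ of $\hat x$ on $L_j$, and the convergence criterion of Lemma~\ref{lem:compo} assembles these into a genuine element of $\Aut_W(X)$. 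Your ``Main obstacle'' paragraph correctly intuits that the interpolation/approximation scheme is the technical heart, and your iterative correction idea is in the right spirit --- but the approximating holomorphic families of automorphisms must be produced by Andersén--Lempert directly from the homotopy in $Y_{X,N}$, not by an Oka deformation of a preexisting continuous map into $\Aut(X)$. (Your Serre-fibration claim for $\mathrm{ev}_N$ is plausible via the local-section argument analogous to Lemma~\ref{lemma:smallauto}, but it becomes moot once you route around $\Aut(X)$ as the paper does.)
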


Since being null-homotopic is a purely topological condition, Theorem~\ref{thm:main} is an Oka principle for a strong form of parametric infinite transitivity.
In the particular case when $W=\C^k$ and $X=\C^n$, any map $W\to Y_{X,N}$ is null-homotopic, so we recover the result of \cite{KL13}, without any restrictions on the dimension of $W$. Moreover, 
Theorem~\ref{thm:main} reduces the problem of simultaneous standardization of parametrized points in $\C^n$ 
by  automorphisms in $\Aut_W(\C^n)$ (not the connected component!) to a purely topological problem as explained in Section~\ref{last}, Corollary \ref{hom-coro}. 
This is a (slightly different) Oka principle for a strong form of parametric $N$-transitivity.

We are also able to prove a similar result when $X$ is a manifold with the $\omega$-volume density property (instead of the density property) under an additional topological assumption.
\begin{theorem}\label{thm:VDP}
Let $X$ be a Stein manifold with a volume form $\omega$ which satisfies the $\omega$-volume density property. 
Assume $X$ has dimension greater than $1$ and that is contractible.
Then similarly, for any natural number $N$ a holomorphically parametrized collection
 of points $x:W\to Y_{X,N}$ can be simultaneously standardized by a volume-preserving automorphism lying in the path-connected component of the identity
$(\Aut_W(X,\omega))^0$ of $\Aut_W(X,\omega)$ if and only if $x$ is null-homotopic.
\end{theorem}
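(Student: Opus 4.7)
The ``only if'' direction is elementary. If $\alpha\in(\Aut_W(X,\omega))^0$ satisfies $\alpha^w(x^j(w))=z^j$, choose a continuous path $\{\alpha_t\}_{t\in[0,1]}$ in $(\Aut_W(X,\omega))^0$ with $\alpha_0=\operatorname{id}$ and $\alpha_1=\alpha$; then
\[
H(w,t)=\bigl((\alpha_t^w)^{-1}(z^1),\dots,(\alpha_t^w)^{-1}(z^N)\bigr)
\]
is a homotopy from the constant map $w\mapsto(z^1,\dots,z^N)$ to $x$, so $x$ is null-homotopic.

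For the ``if'' direction my plan is to run the same strategy as for Theorem~\ref{thm:main}, adapted to the volume-preserving category. Simultaneous standardization by $\alpha\in\Aut_W(X,\omega)$ is equivalent to finding a holomorphic lift of $x$ along the orbit-type map $\operatorname{ev}\colon\Aut(X,\omega)\to Y_{X,N}$, $\varphi\mapsto(\varphi^{-1}(z^1),\dots,\varphi^{-1}(z^N))$; a lift $\tilde x(w)=\alpha^w$ precisely encodes the required parametrized automorphism. Null-homotopy of $x$ is the topological input needed to produce a continuous such lift, and the parametric Oka principle for sections of (sub)elliptic submersions over Stein bases should promote it to a holomorphic one. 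The required domination of $\operatorname{ev}$ is furnished by sprays built from compositions of time-$t$ flows of complete $\omega$-preserving vector fields; the $\omega$-VDP guarantees an ample supply of such fields.

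Contractibility of $X$ enters twice. Topologically, together with $\dim_{\C}X>1$ it ensures that $Y_{X,N}$ is simply connected, so null-homotopy is the \emph{only} obstruction to the existence of a continuous lift. Analytically, the parametric volume-preserving Andersén--Lempert approximation carries a ``flux'' or periods obstruction valued in a cohomology group of $X$ (one would like to represent an $\omega$-preserving isotopy as a product of flows of complete $\omega$-preserving vector fields, and the failure is detected by integrating certain $(n{-}1)$-forms over cycles). Contractibility of $X$ kills this obstruction unconditionally, so one does not have to keep track of cohomological side-conditions as $w$ varies over~$W$.

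I expect the main difficulty to be the parametric volume-preserving Andersén--Lempert step: producing, holomorphically in $w\in W$, a factorization of the candidate isotopy into time-one flows of complete $\omega$-preserving vector fields, in such a way that the resulting automorphism stays in $(\Aut_W(X,\omega))^0$. Once this parametric approximation is in hand, combining it with the continuous lift coming from null-homotopy and with the Oka principle should deliver the desired $\alpha\in(\Aut_W(X,\omega))^0$ and complete the proof.
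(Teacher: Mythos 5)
There is a genuine gap. Your ``if'' direction proposes to lift $x$ through the evaluation $\operatorname{ev}\colon\Aut(X,\omega)\to Y_{X,N}$ by invoking the parametric Oka principle for sections of elliptic submersions. But $\Aut(X,\omega)$ is an infinite-dimensional group and $\operatorname{ev}$ is a submersion from an infinite-dimensional manifold; there is no established Oka theory in that setting, and in fact the paper explicitly raises precisely this as an \emph{open question} in Section~\ref{last} (``is there an `Oka theory' for infinite-dimensional manifolds, and are the groups $\Aut(X)$\dots\ Oka-Forstneri\v c manifolds in any such sense?''). The paper avoids this problem entirely: it only uses the Oka property of the \emph{finite-dimensional} manifold $Y_{X,N}$ (Theorem~\ref{YisOka}), via Gromov's theorem applied to sprays built from finitely many complete fields (Lemma~\ref{spanning}), to upgrade the continuous homotopy $x\simeq\hat{x}$ to a smooth homotopy through holomorphic maps (Corollary~\ref{Oka+smooth}). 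It then proves the theorem by a completely different mechanism: a parametric volume-preserving Anders\'en--Lempert approximation (Propositions~\ref{isotopy1} and~\ref{isotopy2}), fed into an infinite iterative scheme (Lemma~\ref{lem:compo}, Proposition~\ref{tricky}) that produces a convergent sequence $\beta_{m,1}=\alpha_m\circ\cdots\circ\alpha_1$ of parametrized automorphisms whose limit is the desired element of $(\Aut_W(X,\omega))^0$. None of this appears in your plan, which jumps directly to a would-be holomorphic section.

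There is a second, more subtle issue: even if one had an infinite-dimensional Oka principle, lifting $x$ holomorphically through $\operatorname{ev}$ would naturally be governed by the topological triviality of the pullback principal bundle $P_x$, which is the content of the paper's Corollary~5.5 (and is equivalent to $x$ lying in the orbit of the constant under the action of $G/G_0$), not by $x$ being null-homotopic. These conditions coincide only when $\Aut(X,\omega)$ has trivial enough topology, which is not assumed. The theorem you are proving makes a finer claim --- standardizability by an element of the path-connected component $(\Aut_W(X,\omega))^0$ iff $x$ is null-homotopic --- and the paper's proof is organized precisely so that the constructed automorphism is smoothly isotopic to the identity at every step (condition (a) of the induction), which is what lands the limit in the identity component. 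Your use of contractibility of $X$ is roughly the right intuition, but the precise place it enters the paper is Proposition~\ref{isotopy2}, where one solves $d\beta=i_\Theta\omega$ on $V\times X$ via the Poincar\'e lemma and then glues with a \v Cech argument; the dimension assumption $\dim X>1$ enters in Lemma~\ref{spanning} to ensure enough complete divergence-free fields.
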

The dimension assumption is obviously necessary, as will be seen below. However we do not know if contractibility can be relaxed for the conclusion to hold. 

\begin{remark*}
It is presently still unknown whether a contractible Stein manifold with the volume density property has to be biholomorphic to  $\C^n$. The authors believe that there
are plenty of them not biholomorphic to $\C^n$, but that the tools for distinguishing them biholomorphically from $\C^n$ have yet to be developed. 
Concrete examples can be found in \cite{KK-volume}. 
For instance the affine algebraic submanifold of $\C^6$ given by the equation $uv = x+ x^2y + s^2 + t^3$ is such an example (for contractibility see the appendix of \cite{KK-Zeit}). 
Another prominent example is the Koras-Russell cubic threefold, see \cite{L}. 
Furthermore, candidate counterexamples to the holomorphic version of the Zariski cancellation problem, of the form $uv = f(z)$ where $f \in \hol (\C^n)$ 
generates the ideal of functions vanishing on a non straightenable embedded $\C^{n-1}$, are of that kind: see \cite{RP}.
\end{remark*}

In what follows the dependence of an automorphism on a parameter is always understood to be a \emph{holomorphic} dependence as just described. 
A homotopy connecting two maps $f_0$ and $f_1$ between any two complex manifolds $W\to X$ is only assumed to be a \emph{continuous} function $f:W\times\I\to X$.
If each $f_t$ is holomorphic, we speak of a \emph{homotopy through holomorphic maps}, 
and if furthermore the function is $\mathcal{C}^k$ (resp. $\mathcal{C}^\infty$), it is a $\mathcal{C}^k$ (resp. \emph{smooth}) \emph{homotopy} between $f_0$ and $f_1$.
Finally if the variable $t$ is allowed to vary in a complex disc $D_r\subset \C \ (r>1)$, and $f$ is holomorphic, we speak of an \emph{analytic} homotopy.

The paper is organized as follows. 
In Section~\ref{sect1} we recall the definition of manifolds with the density property and we prove a general parametric version of the Anders\'en-Lempert theorem.
In Section~\ref{sect2.1} we recall the $\omega$-volume density property and discuss the approximation of local holomorphic phase flows by volume-preserving automorphisms in the parametric case, which turns out to be more elusive.
In Section~\ref{sect2} we establish that $Y_{X,N}$ is elliptic in Gromov's sense and hence an Oka-Grauert-Gromov h-principle applies to maps $W\to Y_{X,N}$; 
this will allow us to use the Andersén-Lempert theorem.
Section~\ref{sect4} contains the details of the proof of Theorem~\ref{thm:VDP}, from which Theorem~\ref{thm:main} also follows.
The idea is to define a countable sequence of automorphisms, each of which maps $x$ closer to some constant $\hat{x}$ on a larger set, which converges to the desired standardization.
In Section~\ref{last} we make explicit a homotopy-theoretical point of view, and prove a version of Grauert's Oka principle for principal bundles. 
Finally we consider two examples which illustrate cases in which the topological obstruction of Corollary \ref{hom-coro} vanishes, and does not vanish, respectively. 
\section{Density Properties and the Anders\'en-Lempert Theorem}\label{sect1}
Let $X$ be a complex manifold, $U\subset X$ an open set,  and $k\in \N\cup\{ \infty\}$. 
A $\mathcal{C}^k$ \emph{isotopy of injective holomorphic maps} is a $\mathcal{C}^k$ map $F:U\times \I\to X$ such that for each
fixed $t\in\I$, the map $F_t:U\to X$ is an injective holomorphic map. 
The main theorem in \cite{FR} states that given a $\mathcal{C}^2$ isotopy of injective holomorphic maps $F_t:U_0\to U_t$ between Runge domains\footnote{
Recall that a pseudoconvex (Stein) domain $U\subset X$ is Runge in $X$ if holomorphic functions on $U$ can be approximated, uniformly on compacts of $U$, by functions holomorphic on $X$. Here all Runge domains are taken to be Stein.} in $\C^n$ such that $F_0$ is the identity,
 then all the maps $F_t$ can be approximated uniformly on compacts by automorphisms of $\C^n$. 
In the same paper, approximation ``near polynomially convex sets'' is proved, in \cite{F94} the required regularity is shown to be $\mathcal{C}^0$, 
and in \cite{K-ALwithparams} a parametric version is shown to hold 
(see also \cite{F-noncritical}, 
where it is used to prove an approximation result for holomorphic submersions). Combining this we obtain:
\begin{theorem}[Anders\'en-Lempert Theorem]
Let $n\geq 2$ and $U$ be an open set in $\C^k\times\C^n$.
Let $F$ be a $\mathcal{C}^p\ (p\geq 0)$ isotopy of injective holomorphic maps from $U$ into $\C^k\times\C^n$ of the form
\begin{equation}
F_t(w,z)=(w,F_t^w(z)),\quad (w,z)\in U,\quad \text{ and } F_0^w=id. \tag{$\star$}
\end{equation}
Suppose $K\subset U$ is a compact polynomially convex subset of $\C^k\times\C^n$, and  
assume that $F_t(K)$ is polynomially convex in $\C^k\times\C^n$ for each $t\in \I$.   Then for all $t\in\I$,
$F_t$ can be approximated uniformly on $K$  (in the $\mathcal{C}^p$ norm) by automorphisms $\alpha_t\in\Aut_{\C^k}(\C^n)$; moreover
$\alpha_t$ depends smoothly on $t$, and $\alpha_0$ can be chosen to be the identity.
\end{theorem}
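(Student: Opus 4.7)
The plan is to assemble the three results cited immediately before the statement: the time-dependent-vector-field version of Andersén–Lempert of \cite{FR}, Forstnerič's relaxation of the regularity hypothesis to $\mathcal{C}^0$ of \cite{F94}, and the parametric version \cite{K-ALwithparams}. The overall strategy is the standard Andersén–Lempert paradigm: differentiate the isotopy to obtain a time-dependent holomorphic vector field, approximate this field by Lie combinations of complete fields whose flows remain fiber-preserving, and concatenate those flows along a fine discretization of $\I$.

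First I would set $V_t := (\partial_s F_s|_{s=t})\circ F_t^{-1}$ on a neighborhood of $F_t(K)$. The fiber-preserving form $(\star)$ of $F_t$ and the initial condition $F_0^w=\operatorname{id}$ force $V_t$ to have no $\partial/\partial w_j$-components: $V_t$ is tangent to each fiber $\{w\}\times \C^n$. One then views $V_t$ as a holomorphic family, parametrized by $(w,t)$, of holomorphic vector fields in the $z$-variable.

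Next I would exploit the polynomial convexity of $F_t(K)$, uniformly in $t$, together with the Oka–Weil theorem to approximate $V_t$ in the $\mathcal{C}^p$ topology jointly in the spatial and time variables by a fiber-tangent vector field defined on all of $\C^k\times \C^n$. This is where the polynomial-convexity hypothesis is consumed. I would then invoke the parametric density property of $\C^k\times\C^n$ from \cite{K-ALwithparams}: each such global fiber-tangent vector field can be approximated, smoothly in $t$, by Lie combinations of complete fiber-tangent vector fields, essentially $w$-dependent shears, whose flows lie in $\Aut_{\C^k}(\C^n)$.

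Finally, partitioning $\I$ into sufficiently small subintervals and approximating, on each piece, the time-$\Delta t$ map of the genuine flow of $V_t$ by a composition of flows of the complete approximants, I would concatenate the pieces to produce an automorphism $\alpha_t\in\Aut_{\C^k}(\C^n)$ that is $\mathcal{C}^p$-close to $F_t$ on $K$. The condition $\alpha_0=\operatorname{id}$ comes for free, since at $t=0$ the concatenation is empty. The main obstacle, and the reason \cite{K-ALwithparams} is really needed on top of \cite{FR,F94}, is the simultaneous $\mathcal{C}^p$ control in both the spatial variables and the time parameter: one has to choose the discretization and the successive Oka–Weil and Lie-combination approximants so that all error estimates depend smoothly on $t$. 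The parametric Andersén–Lempert result of \cite{K-ALwithparams} is designed precisely to carry out this bookkeeping.
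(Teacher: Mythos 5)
Your proposal reproduces the paper's own sketch: differentiate the isotopy to a time-dependent, fiber-tangent holomorphic vector field; use polynomial convexity of $F_t(K)$ to globalize via Runge/Oka--Weil; approximate by Lie combinations of complete fiber-tangent (shear) fields; and concatenate time-$\Delta t$ flows along a fine partition of $[0,1]$, which is exactly the outline the paper gives before deferring to \cite{FR}, \cite{F94} and \cite{K-ALwithparams} for details. One small mischaracterization: you locate the need for \cite{K-ALwithparams} in ``simultaneous $\mathcal{C}^p$ control in \dots the time parameter,'' whereas what that reference really supplies is the holomorphic dependence on the Stein parameter $w$ --- i.e.\ that the approximating automorphisms can be taken fiber-preserving in $\Aut_{\C^k}(\C^n)$ rather than merely in $\Aut(\C^{k+n})$ --- the smooth $t$-dependence being a standard bookkeeping point already present in \cite{FR,F94}.
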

We refer to the cited references for a complete proof of this theorem. 
However it is convenient to introduce here the ideas involved, which are best understood in the language of vector fields and their flows. 
In what follows and in the rest of this paper we will only consider holomorphic vector fields, 
that is, sections of the bundle $T^{1,0}X$; we implicitly identify $T^{1,0}X$ with $TX$. 

The isotopy can be interpreted as the flow of a time-dependent vector field, which has no component in the $w$ direction. 
The polynomial convexity is used to construct a neighborhood of $K$ with the property that all of its images under the isotopy are Runge.
The Runge property is then used to approximate this time-depending field at finitely many instants by time-independent global fields (without components in the $w$ direction).
The main point is that each of these global fields is approximated by a finite sum of \emph{complete} fields,
that is, fields for which the flow is assumed to exist for all complex times and initial conditions.
Then the composition of these flows provide automorphisms depending on $w$. A version of this was proven in the early 90's by 
E. Anders\'en and L. Lempert \cite{AL}; the association of their names to this theorem has since then been established in the literature.

In \cite{Varolin1} and \cite{Varolin2}
D. Varolin introduced a class of manifolds where the ``main point'' above holds. 
Namely, a complex manifold $X$ is said to have the (holomorphic) \emph{density property} or DP
if the Lie algebra generated by the complete vector fields is dense in the algebra of all vector fields on $X$ in the compact-open topology. 
The relevant fact 
is that the flow of a vector field on such a manifold can be approximated by flows of complete fields. 
In a recent paper by T. Ritter \cite{Tyson} there is a detailed proof of a general version of the Anders\'en-Lempert theorem for manifolds with the density property, where polynomial convexity must be replaced 
by $\hol(X)$-convexity: given a complex manifold $X$, a compact $K\subset X$ is said to be $\hol(X)$-convex if $K=\widehat{K}_{\hol(X)}$ where
\[
\widehat{K}_{\hol(X)}=\set{x\in X; |f(x)|\leq \sup_{y\in K}|f(y)|\quad \forall f\in\hol(X)}.
\]
If we further allow the maps $F_t$ to depend holomorphically on a parameter $w$ in a Stein manifold $W$, then we can closely follow the proof in \cite{Tyson} by carrying a parameter. 
The only apparent difficulty arises when the density property is used to construct a vector field in the Lie algebra generated by complete fields, 
for the holomorphic dependence of these new fields on $w$ is not obvious. However Lemma 3.5 in \cite{Varolin1} shows precisely that if $V_w$ is a vector field on $X$ depending 
holomorphically on a Stein parameter $w$, then $V_w$ can be approximated locally uniformly on $W\times X$ by Lie combinations of complete vector fields which depend holomorphically on the parameter. 
This proves the following parametric version of the Anders\'en-Lempert theorem in manifolds with the density property.
\begin{theorem}
\label{AL}
Let $W$ be a Stein manifold and $X$ a Stein manifold with the DP. 
Let $U\subset W\times X$ be an open set and $F_t:U\to W\times X$ be a  smooth  isotopy of injective holomorphic maps of the form $(\star)$.
Suppose $K\subset U$ is a compact set such that $F_t(K)$ is $\hol(W\times X)$-convex for each $t\in \I$.  Then for all $t\in\I$,
$F_t$ can be approximated uniformly on $K$ (with respect to any distance function on $X$) by automorphisms $\alpha_t\in\Aut_{W}(X)$ which depend smoothly on $t$, and moreover we can choose $\alpha_0=id$.
\end{theorem}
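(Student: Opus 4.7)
The plan is to follow Ritter's proof of the Andersén-Lempert theorem for manifolds with the density property, carrying the parameter $w\in W$ at every step, with the parametric input supplied by Varolin's Lemma 3.5. First, I would translate the isotopy into vector-field language: differentiating and pulling back by the inverse, $V_t := (\partial_t F_t)\circ F_t^{-1}$ is a time-dependent holomorphic vector field defined on a neighborhood of each $F_t(K)$ in $W\times X$, and by the product form $(\star)$ this field has no component in the $W$-direction, i.e.\ it is tangent to the fibers $\{w\}\times X$. Using the $\hol(W\times X)$-convexity of each $F_t(K)$ together with smooth dependence on $t$, one produces an open neighborhood $\Omega\Subset U$ of $K$ such that $F_t(\Omega)$ is Runge in $W\times X$ for every $t\in\I$, providing the room needed for Runge approximation at each step.

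Next, I would discretize the time interval $0=t_0<t_1<\dots<t_m=1$ and on each subinterval approximate $V_t$ by the frozen field $V_{t_j}$, whose short-time flow approximates $F_{t_{j+1}}\circ F_{t_j}^{-1}$ on $F_{t_j}(K)$ with error controlled by the mesh size. Each frozen field, defined only on $F_{t_j}(\Omega)$, is then approximated via Runge on the $\hol(W\times X)$-convex set $F_{t_j}(K)$ by a globally defined fiber-tangent holomorphic vector field $\tilde V_j$ on $W\times X$. So far the proof is essentially a fiber-tangent version of Ritter's.

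The key parametric step is now invoked: each $\tilde V_j$ is a family of holomorphic vector fields on $X$ depending holomorphically on the Stein parameter $w\in W$, and by Lemma~3.5 in \cite{Varolin1} such a family can be approximated, uniformly on compacts of $W\times X$, by Lie polynomials in complete holomorphic vector fields on $X$ whose coefficients are holomorphic in $w$. A complete field on $X$ with a holomorphic $w$-dependent coefficient lifts to a complete fiber-tangent field on $W\times X$ whose time-$s$ flow lies in $\Aut_W(X)$, so a Trotter-product argument (with the standard commutator trick $\exp(s[A,B])=\lim \bigl(\Phi^{-A}_{\sqrt{s/n}}\Phi^{-B}_{\sqrt{s/n}}\Phi^{A}_{\sqrt{s/n}}\Phi^{B}_{\sqrt{s/n}}\bigr)^n$) approximates the flow of $\tilde V_j$, on the compact $F_{t_j}(K)$, by a composition of elements of $\Aut_W(X)$. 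Composing these building blocks in order up to $t$ defines $\alpha_t$; smooth dependence on $t$ is arranged by choosing the complete-field decomposition to depend smoothly on the subdivision parameters (or, equivalently, by rescaling each flow to sweep through its subinterval), and $\alpha_0=\operatorname{id}$ is automatic.

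The main obstacle, and the entire reason the proof requires something beyond the non-parametric Andersén-Lempert theorem, is precisely the Varolin approximation: one needs the approximating complete vector fields to depend holomorphically on $w$, so that their flows define genuine automorphisms of $W\times X$ commuting with the projection to $W$, rather than only fiberwise automorphisms assembled in a measurable way. A secondary technical burden is uniformity in $t$: the Runge approximations, the Trotter estimates, and the commutator limits must all be made quantitative and uniform in the time-subdivision, so that the telescoping error on $K$ tends to zero and the final family $\alpha_t$ is genuinely smooth in $t$.
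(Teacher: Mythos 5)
Your proposal matches the paper's own argument: the authors likewise follow Ritter's proof of the Andersén--Lempert theorem for density-property manifolds, carrying the Stein parameter $w$ throughout, and identify the single point where parametric holomorphy is nontrivial (the approximation of a local field by Lie combinations of complete fields) as being supplied by Lemma~3.5 of \cite{Varolin1}. The only cosmetic difference is that the paper prefers to reduce Lie brackets to sums of complete fields via the limit $[V,W]=\lim_{t\to 0^+}\frac{(V^t)^*W-W}{t}$ rather than the Trotter/commutator product you invoke, but both are standard and interchangeable here.
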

\subsection{\textit{Volume density property}}\label{sect2.1}
Let now $X$ be a complex manifold equipped with a holomorphic volume form, that is, a nowhere vanishing holomorphic $n$-form $\omega$, and denote by $\Vect(X,\omega)$ the Lie algebra of 
vector fields $\Theta$ on $X$ which are $\omega$-divergence free, that is, such that $L_\Theta \omega=0$.
Here $L$ denotes the Lie derivative; 
by the Cartan formula, this is equivalent to saying that the $n-1$  form $i_{\Theta}\omega$, the contraction of $\omega$ by $\Theta$, is closed.
Following  \cite{Varolin1} we say that $X$ has the $\omega$-volume density property (or simply $\omega$-VDP) if the Lie algebra generated by the complete $\omega$-divergence free fields is dense in $\Vect(X,\omega)$. 
We will also consider $\Aut(X,\omega)$ to be the automorphisms $\alpha$ preserving $\omega$ (i.e. $\alpha^*\omega=\omega$), which we call volume-preserving. 
Further, we denote by $\Aut_W(X,\omega)$ the set of parametric volume-preserving automorphisms of $X$, 
i.e. automorphisms of $W\times X$ of the form $\alpha(w,z)=(w,\alpha^w(z))$ with each $\alpha^w$ volume-preserving.

In the non-parametric case (when $W$ is a singleton), one finds for example in \cite{KK-state} 
a volume-preserving analogue of the Anders\'en-Lempert theorem on Stein manifolds with the $\omega$-volume density property. 
It was noticed in \cite{Erratum} that
the approximation cannot be on arbitrary $\hol(X)$-compact subsets $K$ of $X$: there are topological obstructions, and the vanishing of $\mathrm{H}^{n-1}_{DR}(K)$ is shown to be sufficient.
In fact the crucial condition is an extension property, which we state immediately in the parametric case.
Let $W$ be as before a Stein manifold and denote by $\pi_W:W\times X \to W$ the projection, and for a subset $U$ of $W\times X$, denote the ``$w$-slices'' by
\[U_w=(\set{w}\times X)\cap U,\]
and its projection to $W$ by
\[
U'=\pi_W(U).
\]
Let $\Omega^k(X)$ (resp. $Z^k(X)$) be the space of holomorphic $k$-forms on $X$ (resp. closed forms).
We want to consider holomorphic mappings of the form 
\begin{equation}\label{betaw}
w\mapsto \beta^w\in\Omega^k(U_w) ,\quad w\in U'=\pi_W(U).
\end{equation}
For this consider the pullback of the bundle $\Omega^k(X)$ by the projection $\pi_X:W\times X\to X$
and denote this bundle over $W\times X$ by $\Omega_W^k(X)$. 
Its global sections are forms on $W\times X$ which locally are of type $\sum_{I}h(w,z)dz_{I}$.
Denote the local sections on $U\subset W\times X$ by $\Omega_W^k(U)$; they define a
coherent sheaf on $W\times X$, 
and we identify a local section $\beta\in\Omega_W^k(U)$ to a holomorphic mapping as in equation (\ref{betaw}).
We can define $\Vect_W(X)$ analogously.

Because divergence free vector fields do not form an analytic subsheaf of $\Vect(X)$, 
the difficulty of proving an obvious analogue of Theorem~\ref{AL} lies in a Runge-type approximation of a locally defined divergence free vector field by a global divergence free field. 
\begin{definition}
Let $U\subset W\times X$. We say a local section $\Theta\in\Vect_W(U,\omega)$, that is, a holomorphic map
\[
w\mapsto \Theta^w\in\Vect(U_w,\omega),\quad w\in U'
\]
is \emph{globally approximable} if there exists a global section $\widehat\Theta\in\Vect_W(W\times X,\omega)$, that is 
a holomorphic map
\[
w\mapsto \widehat\Theta^w\in\Vect(X,\omega),\quad w\in W
\]
 approximating
$\Theta$ uniformly on compacts of $U$.
\end{definition}
Next we explain what our sufficient condition is.
Assume that $X$ is a Stein manifold with the $\omega$-VDP, and let $U\subset W\times X$ be open.
Let $F_t:U\to W\times X$ be a smooth isotopy of injective, volume-preserving holomorphic maps of the form $(\star)$.
Consider now the $\omega$-divergence free vector fields
\[
\Theta_t^w=\frac{dF_s^w}{ds}\bigg|_{s=t}\circ (F_t^w)^{-1}.
\]
If each $\Theta_t$ can be globally approximated in the sense just defined, with smooth dependence on $t$, then the $\omega$-VDP
can be used exactly as in the proof of  Theorem~\ref{AL} to show that 
 each $F_t$ can be approximated uniformly on compacts of $U$  by volume-preserving automorphisms $\alpha_t\in\Aut_W(X,\omega)$ which depend smoothly on $t$, with $\alpha_0=id$.

We will now give two instances where such a global approximation is possible, both of which will be used below. We fix from now on a distance function $d$ on $X$.
\begin{proposition}\label{isotopy1}
Let $W$ and $X$ be Stein manifolds and assume that $X$ has an $\omega$-VDP.
Let $f:W\times \I\to X$ be a  smooth homotopy through holomorphic maps between $f_0$ and $f_1$. If $L\subset W$ is a $\hol(W)$-convex compact, 
then given $\epsilon>0$ there exists $A_t\in\Aut_W(X,\omega)$, with $A_0=id$, depending smoothly on $t$, such that
\[
d( A_t^w\circ f_0(w),f_t(w))<\epsilon\quad \forall (w,t)\in L\times\I.\qedhere
\]
\end{proposition}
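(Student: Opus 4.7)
The plan is to construct a smooth isotopy $F_t$ of injective $\omega$-preserving holomorphic maps of the form $(\star)$ on a neighborhood of the graph of $f_0$ over $L$, moving this graph to the graph of $f_t$, and then feed it into the parametric $\omega$-VDP Anders\'en--Lempert machinery outlined just above.

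Set $\tilde{f}_t(w) := (w, f_t(w))$ and $\Gamma_t := \tilde{f}_t(L) \subset W \times X$. First I would observe that each $\Gamma_t$ is $\hol(W\times X)$-convex: the $\hol(W)$-convexity of $L$ lifts through the holomorphic section $\tilde{f}_t$ of the Stein product $W \times X \to W$ via pullbacks of functions in $\hol(W)$. A suitable compact tubular neighborhood of $\Gamma_t$ is then also $\hol(W\times X)$-convex, which is the convexity hypothesis needed by the Anders\'en--Lempert machine.

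Next I would build a smooth family of $\omega$-divergence-free holomorphic vector fields $\Theta_t(w, z) = (0, \Theta_t^w(z))$ on a Stein neighborhood $U$ of $\bigcup_{t\in\I}\Gamma_t$ satisfying $\Theta_t^w(f_t(w)) = \partial_t f_t(w)$, and take $F_t$ to be its flow. On a holomorphic Darboux chart for $\omega$ in which $\omega = dz_1 \wedge \cdots \wedge dz_n$, if $\partial_t f_t(w) = \sum h_j(w,t)\partial_{z_j}$ then the constant-in-$z$ extension $\Theta^w_t(z) := \sum h_j(w,t)\partial_{z_j}$ is automatically $\omega$-divergence-free, holomorphic in $(w,z)$, and smooth in $t$. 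By compactness of $L \times \I$ a finite such cover suffices, and these local extensions are glued into a global $\Theta_t$ on $U$ using Cartan's Theorem~B on the Stein space $W \times X$. Integrating $\Theta_t$ from time $0$ to $t$ (subdividing $\I$ and concatenating if needed to keep the flow on a fixed compact) yields the isotopy $F_t$ with $F_0 = id$ and $F_t^w(f_0(w)) = f_t(w)$ on $L$.

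I would then verify that $\Theta_t$ is globally approximable in the sense defined just before the statement. Fiberwise on each slice $\{w\}\times X$, the slice $U_w$ can be chosen as a small contractible neighborhood of $f_0(w)$, so the closed-$(n-1)$-form obstruction to Runge-type approximation of $\omega$-divergence-free fields vanishes; a parametric Runge argument on the Stein manifold $W \times X$ (analogous to the one invoked in the DP case via Lemma~3.5 of \cite{Varolin1}, used above for Theorem~\ref{AL}) then gives a global $\omega$-divergence-free field on $W \times X$, holomorphic in $w$ and smooth in $t$, approximating $\Theta_t$ uniformly on the chosen $\hol(W\times X)$-convex compact containing $\Gamma_0$. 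Plugging this into the $\omega$-VDP Anders\'en--Lempert scheme sketched immediately before the statement, $F_t$ is approximated uniformly on a compact neighborhood of $\Gamma_0$ by $A_t \in \Aut_W(X,\omega)$ with $A_0 = id$, depending smoothly on $t$. Since $F_t^w(f_0(w)) = f_t(w)$ exactly, this gives $d(A_t^w\circ f_0(w), f_t(w)) < \epsilon$ on all of $L \times \I$.

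The main obstacle is the divergence-free generator $\Theta_t$: the Cartan~B-style patching of constant-in-$z$ Darboux extensions has to be carried out while preserving the $\omega$-divergence-free constraint across charts, and the global approximability then has to be upgraded parametrically across the Stein base $W$. Once $\Theta_t$ is in hand and globally approximable, the rest is the $\omega$-VDP Anders\'en--Lempert machine already set up.
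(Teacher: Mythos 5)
Your overall plan matches the paper's: build an isotopy $F_t$ of injective $\omega$-preserving holomorphic maps near the graph of $f_0$ that tracks $f_t$, show its infinitesimal generator is globally approximable, then feed it into the $\omega$-VDP Anders\'en--Lempert machine. Your global-approximability step (contractible fibers, pass to the closed $(n-1)$-form $i_\Theta\omega$, integrate to an $(n-2)$-form via Poincar\'e, Runge-approximate, dualize back) is the same mechanism the paper uses, stated somewhat more cursorily.

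The gap is in your construction of $F_t$. You propose to build a single divergence-free holomorphic $\Theta_t(w,z)$ on a Stein neighborhood $U$ of the \emph{entire} union $\bigcup_{t}\Gamma_t$, by taking constant-in-$z$ extensions in local charts adapted to $\omega$ and ``gluing into a global $\Theta_t$ on $U$ using Cartan's Theorem~B.'' That gluing is exactly the unresolved step: there is no holomorphic partition of unity in the $(w,z)$ variables, and a bare appeal to Theorem~B does not preserve both constraints — divergence-freeness \emph{and} tangency to the moving graph $\Gamma_t$ — simultaneously. A \v Cech splitting of the cocycle of differences would produce a divergence-free global field, but the corrections $d\gamma^a$ it introduces have no reason to vanish along $\Gamma_t$, so the resulting flow need not carry $f_0$ to $f_t$. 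You flag this as ``the main obstacle'' without resolving it, and it is precisely where the proposal stops short of a proof. The paper avoids the issue entirely: it never constructs a field on all of $U$ for all $t$ at once. Instead it covers the path $t\mapsto f_t(w)$ by finitely many coordinate neighborhoods $U_0(w), U_{t_1}(w), \dots, U_1(w)$ (with $\omega$ constant in each, after the Moser trick), defines the constant local field in each, and patches \emph{in the time variable only} via a cut-off $\chi^w(t)$: for each $t$ the field is a convex combination of at most two local fields, both defined on the overlap where the flow of a shrunk neighborhood of $f_0(w)$ actually lives. Convex combinations of divergence-free fields are divergence-free, and both terms restrict to $\partial_t f_t$ on the graph, so no space-gluing is ever needed. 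This time-cut-off construction is the missing idea; without it (or some genuine substitute such as an interpolation theorem along the graph combined with a \v Cech argument as in the paper's Proposition~\ref{isotopy2}), the argument does not close.
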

Observe that a similar result holds for maps into $X^N$ and therefore, in the notation of the introduction, 
into $Y_{X,N}$ (see the proof of Corollary \ref{stepchange} with the notation preceding Lemma \ref{spanning}).
Furthermore, note that if $X$ has the DP, then the proof below can be considerably simplified to obtain the same result with $A_t\in\Aut_W(X)$ only. 
\begin{proof}
We claim that there is a suitable neighborhood $U\subset W\times X$ of the graph
\[
\Gamma_{L}(f_0)=\set{(w,f_0(w)); w\in L},
\]
with contractible fibers $U_w$, and on it an isotopy of injective volume-preserving holomorphic maps $F$ of the form ($\star$) extending the definition of $f_t$, i.e.
\[
F_t^w(f_0(w))=f_t(w)\quad \forall (w,t)\in U'\times\I.
\]
Since $f$ can be thought of as a section of the trivial holomorphic fibration $W\times X$, the pullback by $f$ of the normal bundle on $W\times X$ is trivial over $L$. 
Hence for each $w\in L$, there is a (contractible) coordinate neighborhood $U_0(w)\subset X$ of $f_0(w)$ with chart 
\[\phi^w_0:U_0(w)\to B\subset \C^n\]
mapping $f_0(w)$ to $0$, depending holomorphically on $w$, and such that the restriction of $\omega$ to $U_0(w)$ is $(\phi^w_0)^*(\tilde{\omega}_w)$,
where $\tilde\omega_w$ is some volume form on $B$: 
\[\tilde\omega_w(z)=g(z,w)dz_1\wedge\cdots\wedge dz_n,\quad g\in\hol(B\times L).
\]
By the Moser trick and compactness of $L$ we may shrink $B$ and $U_0(w)$ in order that for all $w\in L$,
\[
\tilde\omega_w(z)=g(0,w)dz_1\wedge\dots \wedge dz_n\quad \forall z\in B.
\]
Note that this $z$-independent formula for $\tilde\omega_w$ holds with respect to a possibly different family of coordinate charts, which we still denote by $\phi^w_0$.
Again by compactness there exist coordinate neighborhoods $U_0(w),U_{t_1}(w),\dots,U_{1}(w)$ of $f_0(w),f_{t_1}(w),\dots,f_1(w)$ respectively, 
each of which are equivalent to $B$ with a constant volume form, covering 
$
\set{f_t(w);t\in \I}\subset X
$
for each $w\in L$. 
On $U_0(w)$ we define, for each $t\in[0,\tau_0(w))$ (where $\tau_0(w)$ is such that $f_t(w)\in U_0(w)$ for all $t<\tau_0(w)$), 
a $\omega$-divergence free field $\Theta_t^{(0)}(w)$ depending holomorphically on $w$ by pulling back the field on $B$ which is constantly equal to 
\[\frac{d}{d s}\bigg|_{s=t}\phi_0^w\circ f_s(w).\]
Similarly on $U_{t_1}(w)$ there is such a family of fields $\Theta_t^{(1)}(w)$ ($\tau_1'(w)<t<\tau_1(w))$, 
so by using a suitable smooth cut-off function $\chi^w(t)$ one can further define on $U_0(w)\cup U_{t_1}(w)$ the fields
\[\chi^w(t)\Theta_t^{(0)}(w)+(1-\chi^w(t))\Theta_t^{(1)}(w),\quad t\in[0,\tau_1(w)),\]
which are still divergence free and restrict to $\frac{d}{d s}\big|_{s=t}f_s(w)$.
For fixed $w$, a small enough neighborhood of $f_0(w)$ will flow entirely inside of $U_0(w)\cup U_{t_1}(w)$ under the flow of the 
above time-dependent vector field. The claim is proved by repeating this construction until the last intersection with $U_1(w)$:
we get a neighborhood $U$ of $\Gamma_{L}(f_0)$ and the desired isotopy $F_t$ consists of the time-$t$ maps of the flow of the described time-dependent field.
Note that $U$ can also be chosen with the property that $F_t(U)$ is Runge for all $t\in\I$: since
$F_t(\Gamma_{W}(f_0))=\Gamma_{W}(f_t)$ is an analytic set in $W\times X$,
its $\hol(W\times X)$-convexity easily follows from the Cartan extension theorem on Stein manifolds; proceed then as in the proof of Lemma 2.2 in \cite{FR}.

We have seen that it suffices to show that each field
\[
\Theta_t^w=\frac{dF_s^w}{ds}\bigg|_{s=t}\circ (F_t^w)^{-1}
\]
can be globally approximated with smooth dependence on $t$.
Let $\eta_t\in Z_W^{n-1}(F_t(U))$ be a section of $Z^{n-1}_W(X)$ defined by
\[
\eta_t^w=i_{\Theta_t^w}\omega\in Z^{n-1}(F_t(U_w)),\quad w\in U'.
\]
The set $F_t(U)$ is fiberwise contractible, and in fact there is a contraction of each $F_t^w(U_w)$ depending holomorphically on $w$, hence Poincar\'e's lemma 
gives an explicit section
$\beta_t\in\Omega_W^{n-2}(F_t(U))$ satisfying
\[
d\beta_t^w=\eta_t^w. 
\]
By the Runge property and Cartan's theorem A, the forms $\beta_t$ can be approximated by global sections in this coherent sheaf, i.e., there a holomorphic map $\widehat{\beta}_t:W \to \Omega^{n-2}(X)$
approximating $\beta_t$, and by the Cauchy estimates we can even ensure that $d\beta_t$ approximates $d\widehat{\beta}_t$. 
In fact, it is classical that these approximations can be taken to be smooth on $t$ (see \cite{Bungart}).
There is a unique vector field $\widehat{\Theta}_t^w$ given by the duality $i_{\widehat{\Theta}_t^w}\omega=d\widehat{\beta}_t^w$ (since $\omega$ is non-degenerate), which is then divergence free. 
By standard theory of differential equations, it approximates $\Theta^w_t$. 
\end{proof}
\begin{proposition}\label{isotopy2}
Let $W$ be Stein and suppose that $X$ is a contractible Stein manifold with an $\omega$-VDP and of dimension at least two.
Suppose that the compact $K\subset W\times X$ has the following form: $K'=\pi_W(K)$ is $\hol(W)$-convex, and there is a $\hol(W)$-convex compact $L'\subseteq K'$ such that 
\[
K=\Gamma_{K'}(g)\cup (L'\times S),
\]
where $S$ is a compact $\hol(X)$-convex subset of $X$ and $g:W\to X$ is holomorphic.
Let $F_t:U\to W\times X$ be an isotopy of injective volume-preserving holomorphic maps of the form $(\star)$ defined on a neighborhood $U$ of $K$ which has the property that $U_w=X$ for all $w\in V$ where $V$ is a neighborhood of $L'$.
Then for any $\epsilon>0$ there exists $A_t\in\Aut_W(X,\omega)$, with $A_0=id$, depending smoothly on $t$, such that
\[
d( A_t^w(z),F_t^w(z))<\epsilon\quad \forall (w,z,t)\in K\times \I.\qedhere
\]
\end{proposition}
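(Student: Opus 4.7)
The approach parallels Proposition~\ref{isotopy1}: we reduce the task to showing that the time-dependent $\omega$-divergence-free vector field
\[
\Theta_t^w=\frac{dF_s^w}{ds}\bigg|_{s=t}\circ (F_t^w)^{-1},
\]
defined on a neighborhood of $F_t(K)$, is globally approximable smoothly in $t$; the conclusion then follows from the $\omega$-VDP exactly as described just before the statement. The plan is to produce a holomorphic primitive $\beta_t\in\Omega_W^{n-2}(F_t(U))$ of the closed form $\eta_t^w=i_{\Theta_t^w}\omega$, to approximate $\beta_t$ by a global section, and to dualize through $\omega$.

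\emph{Fiberwise Poincar\'e.} After shrinking $U$ we may arrange $U=U_g\cup(V\times X)$, where $U_g$ is a tubular Stein neighborhood of $\Gamma_{K'}(g)$ whose $w$-slices are small contractible neighborhoods of $g(w)$. Then every slice $U_w$ is contractible (the slice over $V$ is all of $X$, contractible by hypothesis), and injectivity of $F_t^w$ ensures that each slice of $F_t(U)$ is contractible as well. Since $n\geq 2$, the parametric Poincar\'e lemma yields local primitives of $\eta_t$ in $\Omega_W^{n-2}$ on $F_t(U_g)$ and on the slab piece $V\times F_t^w(X)$; a Cousin~I argument on their Stein intersection glues them into $\beta_t\in\Omega_W^{n-2}(F_t(U))$ with $d\beta_t=\eta_t$, smoothly in $t$.

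\emph{Runge approximation.} We next verify that $F_t(K)$ is $\hol(W\times X)$-convex. The image of the graph, $F_t(\Gamma_{K'}(g))=\Gamma_{K'}(g_t)$ with $g_t(w):=F_t^w(g(w))$, is the graph of a holomorphic map over the $\hol(W)$-convex compact $K'$, and is therefore convex. The slab $F_t(L'\times S)$ is the biholomorphic image under $F_t|_{V\times X}$ of the $\hol(V\times X)$-convex compact $L'\times S$. A case-by-case separation of any point $(w_0,z_0)\notin F_t(K)$---via the ideal sheaf of the analytic graph for $w_0\in K'\setminus L'$, and via functions pulled back through $F_t^{-1}$ on $V\times X$ combined with the $\hol(X)$-convexity of $S$ for $w_0\in L'$---yields convexity of the union. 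Cartan's theorems~A and~B then produce a global section $\widehat{\beta}_t\in\Omega_W^{n-2}(W\times X)$ approximating $\beta_t$ uniformly on a neighborhood of $F_t(K)$, smoothly in $t$ by \cite{Bungart}; Cauchy estimates give the corresponding approximation $d\widehat{\beta}_t\approx\eta_t$.

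\emph{Completion and main obstacle.} Non-degeneracy of $\omega$ defines a unique global divergence-free field $\widehat{\Theta}_t\in\Vect_W(X,\omega)$ by $i_{\widehat{\Theta}_t}\omega=d\widehat{\beta}_t$, depending smoothly on $t$ and approximating $\Theta_t$ near $F_t(K)$. Running the parametric Anders\'en--Lempert scheme of Theorem~\ref{AL} with the $\omega$-VDP in place of the DP, and invoking the parametric Lemma~3.5 of \cite{Varolin1} to preserve holomorphic dependence on $w$, we approximate $\widehat{\Theta}_t$ by Lie combinations of complete parametric $\omega$-divergence-free fields whose composed flows yield the desired $A_t\in\Aut_W(X,\omega)$ with $A_0=\mathrm{id}$ and $d(A_t^w(z),F_t^w(z))<\epsilon$ on $K\times\I$. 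The subtle step is the $\hol(W\times X)$-convexity of the composite set $F_t(K)$: though each piece is individually convex, the union requires a careful case-by-case separation that rests on $F_t^w$ being defined on all of $X$ for $w\in V$; the remaining ingredients are parametric analogues of arguments already present in the literature.
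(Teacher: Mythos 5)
Your overall strategy matches the paper's: decompose $U$ into a fiberwise‑contractible tubular neighborhood of the graph and the slab $V\times X$, produce a local Poincar\'e primitive $\beta$ of $\eta_t=i_{\Theta_t}\omega$ on each piece, glue, approximate globally via the Runge property and Cartan's theorems, and dualize through $\omega$.

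There is, however, a concrete gap in your gluing step. You invoke ``a Cousin~I argument'' to merge the two primitives $\beta_{A,t}$ and $\beta_{B,t}$ into a single $\beta_t$ on $F_t(U)$ with $d\beta_t=\eta_t$. A direct Cousin~I splitting of the difference $\beta_{A,t}-\beta_{B,t}$ in $\Omega^{n-2}_W$ does yield $\gamma_A,\gamma_B$ with $\beta_{A,t}-\gamma_A=\beta_{B,t}-\gamma_B$ on the overlap, but the glued form no longer satisfies $d\beta_t=\eta_t$, since $\gamma_A,\gamma_B$ need not be closed. The paper gets around this by a degree shift: the difference $\beta_{A,t}-\beta_{B,t}$ is \emph{closed} on the intersection, and the intersection has contractible fibers, so by (parametric) Poincar\'e one writes $\beta_{A,t}^w-\beta_{B,t}^w=d\delta_{AB,t}^w$ for $\delta_{AB,t}\in\Omega^{n-3}_W$. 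Then Cartan's theorem B (via the \v Cech--Leray comparison) gives a splitting $\delta_{AB,t}=\delta_{B,t}-\delta_{A,t}$ in degree $n-3$, and the modified primitives $\beta_{A,t}+d\delta_{A,t}$ and $\beta_{B,t}+d\delta_{B,t}$ agree on the overlap while retaining $d(\cdot)=\eta_t$. The case $n=2$ also needs separate handling (there are no $(-1)$-forms; the paper replaces the splitting by a constant). Without this degree‑shift step your argument does not produce a global primitive, and it is precisely here that the contractibility of the intersection and the dimension hypothesis $n\geq 2$ are used.

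The rest of your proposal (verification of $\hol(W\times X)$-convexity of $F_t(K)$, smooth dependence in $t$ via \cite{Bungart}, and running the parametric Anders\'en--Lempert scheme) is consistent with the paper's approach; in fact you spell out the convexity of $F_t(K)$ in more detail than the paper does.
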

\begin{proof}
We may assume without loss of generality that $U$ is a neighborhood of $K$ of the form 
$U=A\cup B$ where $A$ is a fiberwise contractible neighborhood of $\Gamma_{U'}(g)$ and $B=V\times X$, such that 
that $F_t(U)$ is Runge (because $K$ is easily seen to be $\hol(W\times X)$-convex: see then argument in the previous proof) 
and $F_t(A)$ has contractible fibers for all $t\in \I$.
Define as before\[
\Theta_t^w=\frac{dF_s^w}{ds}\bigg|_{s=t}\circ (F_t^w)^{-1}\ \text{ and }\ \eta_t^w=i_{\Theta_t^w}\omega\in Z^{n-1}(F_t(U_w)),\quad w\in U'.
\]
By the Poincar\'e lemma and the contractibility of $X$ and of the fibers of $F_t(A)$, there are local sections
$\beta_{A,t}\in\Omega^{n-2}_W(F_t(A))$ and $\beta_{B,t}\in\Omega_W^{n-2}(F_t(B))$
such that $d\beta_{A,t}^w=\eta_t^w$ on $F_t(A)$ and $d\beta_{B,t}^w=\eta_t^w$ on $F_t(B)$.
It now suffices to find a single family $\beta_t\in\Omega_W^{n-2}(F_t(U))$ depending smoothly on $t$ and satisfying $d\beta_t^w=\eta_t^w$ for all $w\in U'$: we would then conclude as in the previous proof.

For simplicity fix $t=0$.
The $n-2$ form $\beta_A-\beta_B$ is closed on $A\cap B$, a set with contractible fibers, so again $\beta_A^w-\beta_B^w=d\delta_{AB}^w$ for a section $\delta_{AB}\in\Omega_W^{n-3}(A\cap B)$.
Consider the covering $\mathcal{U}=\set{A,B}$ of $U$ and let $\check{\mathrm{H}}^1(\mathcal{U},\Omega_W^{n-3}(U))$ be
the first \v Cech cohomology group of the covering $\mathcal{U}$ with values in the sheaf $\Omega_W^{n-3}(U)$. 
By Cartan's theorem B, the sheaf  $\Omega_W^{n-3}(U)$ is acyclic on $A,B$ and $A\cap B$, 
so by Leray's theorem 
\[
\check{\mathrm{H}}^1(\mathcal{U},\Omega_W^{n-3}(U))=\mathrm{H}^1(U,\Omega_W^{n-3}(U)).
\]
But again the right-hand side is trivial.
The vanishing of the \v Cech cohomology group yields a splitting
\[
\delta_{AB}^w=\delta_B^w-\delta_A^w, 
\]
where $\delta_A$ (resp. $\delta_B$) is a section of $\Omega_W^{n-3}(X)$ on $A$ (resp. $B$). 
Now since
\[
\beta_A^w+d\delta_A^w=\beta_B^w+d\delta_B^w\quad \forall w\in V,
\]
the ``glueing'' property of the sheaf gives the desired $\beta^w$.
In the case that $n=2$, replace the above formula by $\beta_A^w=c+\beta_B^w$, where $c$ is a constant.

When $t$ is allowed to vary smoothly in $\I$, the form $\delta_{AB,t}$ above can be chosen depending smoothly on $t$.
We can consider the sheaf of smooth maps from $\I$ into $\Omega_W^{n-3}$,
whose cohomology is shown in \cite{Bungart} to vanish (as in Cartan's theorem B), 
so the argument above carries to this new sheaf and we obtain a smoothly depending family of sections $\beta_t\in\Omega_W^{n-2}(F_t(U))$ as desired.
\end{proof}
\section{The Oka Property}\label{sect2}
Stein manifolds with the density property are of interest not only in view of the Anders\'en-Lempert approximation described previously, but also because
they enjoy a flexibility property now referred in the literature as the ``Oka property''. 

There are several equivalent characterizations of Oka-Forstneri\v c manifolds; the survey \cite{survey} gives a detailed account. 
For our purposes, we define $Y$ to be an \emph{Oka-Forstneri\v c  manifold} if it enjoys the following property, called in the literature the \emph{Basic Oka Property with approximation and interpolation}:

\begin{property*}
Let $T$ be a closed complex submanifold of a Stein manifold $S$, and $K$ be a $\hol(S)$-convex compact subset of $S$.
Let $f:S\to Y$ be a continuous map such that  $f$ is holomorphic in a neighborhood of $K$, and $f$ is holomorphic on $T$.
Then there is (a homotopy joining $f$ to) some holomorphic $g:S\to Y$ such that $g=f$ on $T$ and $g$ is uniformly close to $f$ on $K$. 
\end{property*}
\begin{theorem}\label{YisOka}
If $X$ is a Stein manifold with the DP or a Stein manifold of dimension greater than one with the $\omega$-VDP, then $Y_{X,N}$ is an Oka-Forstneri\v c manifold for any $N$.
\end{theorem}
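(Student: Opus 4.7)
My plan is to show that $Y_{X,N}$ is elliptic in Gromov's sense by constructing a dominating holomorphic spray built from flows of complete vector fields on $X$; ellipticity then implies the Oka property by Gromov's theorem, which is exactly the Basic Oka Property with approximation and interpolation that defines an Oka--Forstneri\v c manifold.

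I would begin by using a standard consequence of (V)DP (Varolin, Kaliman--Kutzschebauch): there exists a finite family $V_1,\dots,V_m$ of complete holomorphic (and, in the VDP case, $\omega$-divergence free) vector fields on $X$ whose values span $T_p X$ at every $p\in X$. Two lifts of these fields to $X^N$ will play a role. The diagonal lift $W_k(y)=(V_k(y^1),\dots,V_k(y^N))$ is complete on $X^N$ and, since each $\phi_k^t$ is a biholomorphism of $X$, preserves $Y_{X,N}$; its differentials, however, span only the diagonal of $T_y X^N$. To span the full tangent bundle I would use \emph{single-slot fields}: for each $j\in\{1,\dots,N\}$ and each $h\in\hol(X^{N-1})$ thought of as a function of $(y^1,\dots,\widehat{y^j},\dots,y^N)$, the vector field on $X^N$ given by $h(\widehat{y^j})\,V_k(y^j)$ in the $j$-th slot and $0$ elsewhere is complete, because the slots $i\neq j$ stay frozen along the flow so the $y^j$-equation reduces to the autonomous $\dot{y}^j=c\,V_k(y^j)$ with $c=h(\widehat{y^j}(0))$. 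Varying $k$, $j$ and $h$, the values of these single-slot fields span $T_y X^N=T_y Y_{X,N}$ at every $y\in Y_{X,N}$, using Steinness of $X$ and the pairwise distinctness of the components. The candidate spray is then the composition of the flows of finitely many chosen single-slot and diagonal fields, parametrized by complex times.

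The main technical obstacle is that a single-slot flow does \emph{not} preserve $Y_{X,N}$ globally: the moving coordinate $y^j$ may collide with a frozen $y^i$ at some nonzero time, so the composed map has target $X^N$ and lands in $Y_{X,N}$ only on a neighborhood of the zero section. I would handle this either via Gromov's composed-spray framework, where local dominating sprays at every point suffice for the Oka property by Forstneri\v c's localization principle, or, more geometrically, via the forgetful submersion $Y_{X,N}\to Y_{X,N-1}$: Theorem~\ref{AL} gives local holomorphic trivializations over contractible neighborhoods, exhibiting this map as a holomorphic fiber bundle whose fibers $X\setminus\{N{-}1\text{ pts}\}$ inherit the (V)DP and are Oka, and the Oka property ascends along such bundles by Forstneri\v c's theorem. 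For the VDP case the only additional care is to choose $h$ so that $hV_k$ remains divergence-free on $X^N$, which is possible precisely because $\dim X\ge 2$.
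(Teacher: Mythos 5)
Your central claim that the diagonal lifts $\oplus V$ ``span only the diagonal of $T_yX^N$'' is the key misconception, and it is what sends you down an unnecessarily hard path. Because the coordinates $y^1,\dots,y^N$ of a point $y\in Y_{X,N}$ are \emph{distinct}, a global field $V$ on $X$ takes independent values $V(y^1),\dots,V(y^N)$ at these points, so $\oplus V(y)=(V(y^1),\dots,V(y^N))$ need not be a ``diagonal'' vector at all. In particular, if $V$ is concentrated near $y^j$ (vanishing near the other $y^i$), then $\oplus V(y)$ is essentially a single-slot vector --- obtained without ever leaving the class of fields that preserve $Y_{X,N}$. This is exactly the mechanism exploited by the paper's Lemma~\ref{spanning}: one chooses coordinate fields supported on disjoint charts around the $x^j$, approximates them by global fields using the Runge property, expresses these as Lie combinations of \emph{complete} fields via the (V)DP, applies $\oplus$, and finally runs a Baire-category argument over $\Aut(X)$ to pass from spanning near one $y_0$ to spanning at every $y\in Y_{X,N}$. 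The resulting complete fields on $Y_{X,N}$ furnish a global dominating spray; Gromov's theorem then gives the Oka property. There is no need to leave $\Aut$-equivariant fields.

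Your two suggested workarounds both have gaps. For (a), a ``local dominating spray at every point'' (a spray defined only on a neighborhood of the zero section, or a dominating spray on a small open chart) is available on \emph{any} complex manifold and carries no information; Gromov's ellipticity and Forstneri\v c's subellipticity require sprays defined over the whole manifold on the total space of a bundle, and the localization theorem you seem to allude to is the Zariski-local one, which requires the pieces to be Zariski open \emph{and} Oka --- again something one would still have to establish. For (b), the forgetful submersion $Y_{X,N}\to Y_{X,N-1}$ is indeed plausibly a locally trivial fiber bundle (a parametric version of $(N{-}1)$-transitivity would produce trivializations), but the assertion that the fiber $X\setminus\{N{-}1\text{ pts}\}$ ``inherits the (V)DP and is Oka'' is not automatic: removing a finite set from a Stein manifold with DP does not obviously preserve DP, and the Oka property of $X$ minus a finite set for general $X$ with DP is itself a nontrivial statement requiring its own argument (it is known in special cases such as $\C^n$, but is not a formal consequence of DP plus $\dim X\geq2$). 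So this route replaces the theorem by a claim of comparable difficulty. The paper's proof via $\oplus$-lifted complete fields avoids all of this.
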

We remark that if $X$ has the $\omega$-VDP and dimension $1$, then it must be either $(\C,dz)$ or $(\C^*,z^{-1}dz)$.
If $N\geq 4$, or in the case $X=\C^*$, then $X^N\setminus \Delta$ is a projective space with too many hyperplanes removed, and this cannot be Oka-Forstneri\v c 
by Theorem 3.1 in \cite{Hanysz}. The same result shows that if $X=\C$ and $N=2$ or $3$, then $X^N\setminus \Delta$ is indeed Oka-Forstneri\v c.

Theorem~\ref{YisOka} will be deduced from the following lemma. First we introduce some more notation. Let $Y=Y_{X,N}$ and 
define the linear map
\[
\oplus:\Vect(X)\to \Vect(Y)
\]
as follows: for each $V\in\Vect(X)$ let $\oplus V \in\Vect(X^N)$ be the vector field in $X^N$ defined by  $\oplus V(z^1,\dots,z^N)=(V(z^1),\dots,V(z^N))\in T_{(z^1,\dots,z^N)}X^N$.
Clearly $\oplus V\in\Vect(Y)$, and since in fact this field is tangent to $\Delta$, the image of a point in $Y$ under the flow of $\oplus V$ remains in $Y$. 
It is clear that $\oplus$ restricts to a map between complete fields. Similarly, we have an obvious map
\[
\oplus: \Aut(X)\to \Aut(Y).
\]
\begin{lemma}\label{spanning}
Let $X$ be a Stein manifold with the DP (resp. with the $\omega$-VDP and dimension greater than one) and let $N\geq 1$.
Then there exist complete (resp. and divergence free) vector fields 
$V_1,\dots,V_m$ on $X$ such that
\[T_{y}Y=\Span\{\oplus V_j (y)\}_j\quad \forall y\in Y=Y_{X,N}.\]
\end{lemma}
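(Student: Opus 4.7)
The plan proceeds in two stages: first a pointwise statement that at each $y \in Y$, the $\oplus$-images of complete (and, in the VDP case, divergence-free) vector fields on $X$ already $\C$-linearly span $T_y Y$; then extraction of a single finite collection that spans uniformly, using that $Y$ is Stein.

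For the pointwise stage, fix $y=(z^1,\dots,z^N)\in Y$ and let $E_y\subseteq T_y Y$ denote the $\C$-linear span of $\{\oplus V(y): V\text{ complete (and div-free) on }X\}$. On the Stein manifold $X$ one can interpolate arbitrary tangent vectors at the $N$ distinct points $z^i$, so the evaluation map $\Vect(X)\to T_y Y$, $V\mapsto \oplus V(y)$, is already surjective on all of $\Vect(X)$. Combined with the DP (resp.\ VDP), which provides density of the Lie algebra generated by complete (resp.\ complete divergence-free) fields in $\Vect(X)$ (resp.\ $\Vect(X,\omega)$), and with the finite dimensionality of $T_y Y$, the Lie-algebra evaluation exhausts $T_y Y$. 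To upgrade from Lie combinations to linear combinations of complete fields themselves, I employ the conjugation trick: for complete fields $V,W$, the pushforward $(\phi_V^t)_*W$ is complete for every $t\in\C$ (being the conjugate of the complete field $W$ by the automorphism $\phi_V^t$), and it remains $\omega$-divergence-free under the VDP assumption since $\phi_V^t$ preserves $\omega$. Its evaluation at $y$ therefore lies in $E_y$ for every $t$, and differentiating at $t=0$ places $-\oplus[V,W](y)$ in the closed subspace $E_y$. Iterating on bracket depth shows every iterated-bracket evaluation at $y$ lies in $E_y$, forcing $E_y=T_y Y$.

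For the global stage, note that $Y=X^N\setminus\Delta$ is Stein (complement of an analytic subset in the Stein manifold $X^N$), so its holomorphic tangent bundle $TY$ is a locally free coherent sheaf of rank $D=N\dim X$. The pointwise stage says that the family $\{\oplus V: V\text{ complete (div-free)}\}$ generates $TY$ at each stalk in the $\hol(Y)$-module sense, which is equivalent to $\C$-linear spanning of each fiber. Invoking Forster's finite generation theorem on Stein manifolds (a rank-$r$ locally free sheaf on a Stein manifold of dimension $d$ is generated by $d+r$ global sections), combined with a Cartan A/B approximation argument on $Y$ and the density afforded by DP/VDP, one extracts finitely many generators $\oplus V_1,\dots,\oplus V_m$ of the required form.

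The main obstacle lies in the global stage, specifically in choosing Forster's finite generators from within the privileged subfamily $\{\oplus V:V\text{ complete}\}$ itself rather than as general $\hol(Y)$-linear combinations of its members; this is resolved by approximating each Forster generator by elements of the subfamily via Cartan's theorems on $Y$ together with the openness of the spanning condition. In the VDP case a further subtlety is that divergence-free fields do not form a coherent analytic subsheaf of $\Vect(X)$, so one translates everything into the coherent setting of closed $(n-1)$-forms $i_V\omega$ as in the preceding propositions; the dimension hypothesis $n\geq 2$ is precisely what makes the passage through $\Omega^{n-1}$ viable and the density argument able to produce divergence-free fields in every prescribed tangent direction at $y$.
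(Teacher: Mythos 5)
The pointwise stage of your plan is sound and close to the paper's opening move: interpolate tangent vectors at $N$ distinct points, pass through the DP/VDP to Lie combinations of complete fields, and use the conjugation trick $(\phi_V^t)_*W$ to replace brackets by linear combinations of complete (and, in the VDP case, still divergence-free) fields while preserving completeness. You are right that the $n\ge 2$ hypothesis is what lets the divergence-free interpolation go through; the paper arranges this concretely by choosing a Runge neighbourhood that is a disjoint union of $N$ contractible balls on which $\omega$ is standard, so that $\mathrm{H}^{n-1}_{DR}$ of the neighbourhood vanishes and local divergence-free fields extend approximately to global ones.

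The global stage, however, contains a genuine gap that I do not see how to repair along the lines you propose. The collection $\mathcal V=\{\oplus V:V\in\Vect(X)\}$ is only a $\C$-linear subspace of $\Vect(Y)$, not an $\hol(Y)$-submodule, and it is far from dense: a field $\oplus V$ satisfies the very rigid constraint that its $j$-th component at $(z^1,\dots,z^N)$ is $V(z^j)$, depending on the $j$-th coordinate alone and with the \emph{same} $V$ across all slots. A Forster generator of $TY$ is an arbitrary holomorphic vector field on $Y$ and in general cannot be approximated uniformly on any compact with nonempty interior by elements of $\mathcal V$, let alone by $\oplus V$ with $V$ complete. Cartan's theorems give approximation by sections of coherent sheaves, but the fields $\oplus V$ do not form (and are not dense in) a coherent subsheaf, so "approximating each Forster generator by elements of the subfamily" is not available. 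Moreover, even if approximation on a compact $K$ were possible, spanning is only an open condition relative to $K$; passing to an exhaustion would produce countably many fields, not finitely many, without a further mechanism. The paper closes this gap by a different and essential idea: the $V_k$ obtained near a base point $y_0$ span $T_yY$ off a proper analytic subset $A\subset Y$, and one uses the action of $\oplus\Aut(X)$ on $Y$ (available thanks to the DP/VDP via flows of complete fields) together with a Baire-category argument on $\Aut(X)$ to find $\Psi\in\Aut(X)$ (resp.\ $\Aut(X,\omega)$) so that the pushforwards $(\oplus\Psi)_*V_k$ hit every irreducible component of $A$; adjoining these fields strictly shrinks the non-spanning locus, and finitely many iterations finish. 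This homogeneity/pushforward mechanism is what substitutes for a Forster-type argument and is the missing ingredient in your plan.
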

In particular, the statement for $N=1$ holds. That case is treated in \cite{KK-state}. We adapt that proof to this more general situation.
\begin{proof}
We give the proof for a manifold with the DP and only give indications of the modifications required for the $\omega$-VDP case.
Let $x^1,\dots,x^N\in X$ be $N$ pairwise distinct points in $X$. Since $X$ is Stein we can pick a Runge open set around $\{x^1\}\cup\cdots\cup\{x^N\}$ of the form
$U=\dot\bigcup_{j=1}^N U^j$, so small that a chart $U^j\to \C^n$ exists for each $j$, where $n$ is the dimension of $X$
(as in the proof of Proposition~\ref{isotopy1}  $\omega|_{U^j}$ is the pullback of the standard volume form
$\omega_{std}=dz_1\wedge \dots\wedge dz_n$).
By pulling back the coordinate vector fields in $\C^n$ we obtain, for each $j=1,\dots,N$, 
\[
V^j_1,\dots,V^j_n\in \Vect(U^j) \text{ such that } \Span\{V^j_i(x^j)\}_i=T_{x^j} X.
\]
For each fixed $j$, define $n$ vector fields on $U$ as follows: 
for $i=1,\dots,n$, let $\Theta_i^j\in \Vect(U)$ be the trivial extension of $V_i^j$ to $U$, that is, extend it as the zero field outside of $U^j$.
(Note these are are divergence free in the other case).
Consider the vector fields $\oplus \Theta_i^j$ defined on $U^1\times\cdots\times U^N\subset Y$. They span the tangent space to $y_0=(x^1,\dots,x^N)$: 
\[
T_{y_0}Y=\Span\{\oplus\Theta_i^j(y_0)\}_{i,j}.
\]
Since $U$ is Runge in $X$, there exists $\eta^j_i\in\Vect(X)$ approximating $\Theta^j_i$ on $\overline{U}$.
Similarly in the volume case, a field $\Theta\in\Vect(U,\omega)$ over a Runge open set with $\mathrm{H}_{DR}^{n-1}(U)=0$ can be approximated by a global field $\eta\in\Vect(X,\omega)$, 
as seen in the discussion in Section~\ref{sect2.1}.
This implies 
that $\oplus \eta_i^j$ approximates $\oplus \Theta_i^j$, so we can assume that
\[
T_{y}Y=\Span\{\oplus \eta_i^j(y)\}_{i,j}
\]
holds for all $y$ in a neighborhood of $y_0$ in $Y$. By the density property, we can further approximate each $\eta_i^j$ by a finite \emph{sum} of complete vector fields $\eta_i^{j,k}$ on $X$. 
Indeed, given \emph{complete} fields $V,W\in \Vect(X)$, $[V,W]=\lim_{t\to 0^+} \frac{(V^t)^*W-W}{t}$, where $V^t$ is the time-$t$ map of the flow of $V$; 
observe that multiplication by $1/t$ and the pullback by a global automorphism preserves the completeness of a field.
Let $\eta_k\in\Vect(X)$ be the collection of the complete fields just obtained. Then the complete fields $V_k=\oplus\eta_k\in\Vect(Y)$ span $T_{y}Y$ for all $y$ in a neighborhood of $y_0$.
Exactly the same holds in presence of the $\omega$-VDP.

We now enlarge this family in order to generate the tangent spaces at any $y\in Y$. 
Notice that the fields $V_k$ span $T_y Y$ on $Y$ minus a proper analytic set $A$, which we decompose into its (possibly countably many) irreducible components $A_i \ (i\geq 1)$. 
It suffices to show that there exists $\Psi\in\Aut(X)$ (resp. $\Aut(X,\omega)$) such that $(\oplus\Psi)(Y\setminus A)\cap A_i\neq \emptyset$ for all $i$. 
Indeed, this would imply that the family $\{(\oplus\Psi)_*(V_k)\}_{k}$ of complete vector fields spans $T_{a_i} Y$ (where $a_i\in (\oplus\Psi)(Y\setminus A)\cap A_i$) for each $i$, 
so the enlarged finite collection $\{\oplus\Psi_*(V_k)\}_{k}\cup\{V_k\}_{k}$ of complete fields would fail to span the tangent space in an exceptional variety of lower dimension.
The conclusion follows from the finite iteration of this procedure. 

 To obtain this automorphism, consider \[B_i=\{\Psi\in\Aut(X); \oplus\Psi(Y\setminus A)\cap A_i\neq \emptyset \}.\]
Each $B_i$ is clearly an open set. To verify that it is also dense, let $\alpha\in\Aut(X)$ and $y^* \in A_i$. 
As above, there are finitely many complete fields $\Theta_k$ on $X$ such that $\oplus \Theta_k$ span the tangent space of $Y$ at $y^*$.
 So there is some complete $V\in \Vect(X)$ such that $\oplus V$ is not tangent to $A_i$. Thus $V^t\circ \alpha$ is an element in $B_i$ for small t (where $V^t$ is the flow of the field $V$).
 By the Baire category theorem\footnote
 {
 This applies here because $\Aut(X)$ (resp. $\Aut(X,\omega)$) is a complete metric space: see \cite[\S4.1]{KK-state}.
 } 
 there exists $\Psi\in\bigcap B_i$ and we are done.
\end{proof}
The conclusion is obviously false for the $\omega$-VDP and $dim(X)=1$: the only divergence free vector fields on $(\C,dz)$ are constant. 
%

We have just showed that there exist finitely many complete vector fields on $Y$ spanning the tangent space everywhere. 
This provides a dominating spray on $Y$, and so $Y$ is an ``elliptic'' manifold. 
It is a theorem of Gromov that such manifolds enjoy the Oka properties (see \cite{F} for details).
Hence Theorem~\ref{YisOka} is proved.

Let us derive another easy but important consequence of the above lemma.
Given complex manifolds $M$ and $X$, we say that a map $\Psi:M\to \Aut(X)$ is holomorphic if the mapping $M\times X\to X$ given by $(m,x)\mapsto \Psi(m)(x)$ is holomorphic. 
\begin{lemma}\label{lemma:smallauto}
Let $X$ be a Stein manifold with the DP (or $\omega$-VDP and $dim(X)>1$) and fix a metric $d$ on it. Let $y_0=(x^{1},\dots,x^{N})\in Y$,
$\epsilon>0$ and a compact $K\subset X$ containing each $x^j$ be given.
Then there is a neighborhood $U_\epsilon$ of $y_0$ in $Y$ with the following property: 
given a complex manifold  $W$ and an analytic homotopy  $f:W\times D_r \to Y\ (r>1)$ satisfying
\[f_t(W)\subset U_\epsilon \text{ for all } t\in D_r,\]
there exists a holomorphic map $\Psi:D_r\to \Aut_W(X)$ (or $\Aut_W(X,\omega)$)
such that $\Psi_0^w=id_X$, and such that for all $ (w,t)\in W\times D_r$,
\begin{enumerate}
\item $d(\Psi_t^w,id)<\epsilon$ and $d((\Psi_t^w)^{-1},id)<\epsilon$ on $K$;
\item and $(\oplus\Psi_t^w)\circ f_0(w)=f_t(w)$.\qedhere
\end{enumerate}
\end{lemma}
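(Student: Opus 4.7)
The plan is to use Lemma~\ref{spanning} to produce complete vector fields whose flows locally trivialize the diagonal in $Y\times Y$ near $(y_0, y_0)$, and then to define $\Psi_t^w$ by applying a holomorphic implicit function solution to the pair $(f_0(w), f_t(w))$. The identity $\Psi_0^w=id$ will come from $s(y,y)\equiv 0$, and $(\oplus \Psi_t^w)\circ f_0(w)=f_t(w)$ will be built into the construction; the only nontrivial work lies in the $C^0$-estimate on $K$.

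First, Lemma~\ref{spanning} provides complete (and $\omega$-divergence free in the $\omega$-VDP case) vector fields $V_1,\dots,V_m$ on $X$ such that the lifted fields $\oplus V_j$ span $T_yY$ for every $y\in Y$. After relabeling and discarding, I may assume that $\oplus V_1(y_0),\dots,\oplus V_k(y_0)$ is a basis of $T_{y_0}Y$, where $k=\dim Y$. Consider the holomorphic composition-of-flows map
\[
\Phi(s,y)=\oplus V_1^{s_1}\circ\cdots\circ \oplus V_k^{s_k}(y),\qquad (s,y)\in\C^k\times Y,
\]
well-defined by completeness and because each $\oplus V_j$ is tangent to $\Delta$ (so the flow preserves $Y$). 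Its differential in $s$ at $(0,y_0)$ identifies $\C^k$ with $T_{y_0}Y$, hence $(s,y)\mapsto(y,\Phi(s,y))$ is a local biholomorphism near $(0,y_0)$. The holomorphic inverse function theorem then yields a neighborhood $U_\epsilon$ of $y_0$ in $Y$ (possibly shrunk further below) and a holomorphic map $s\colon U_\epsilon\times U_\epsilon\to\C^k$ satisfying $\Phi(s(y,y'),y)=y'$ and, by uniqueness, $s(y,y)=0$.

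Given the analytic homotopy $f\colon W\times D_r\to Y$ with $f_t(W)\subset U_\epsilon$ for every $t\in D_r$, I will set $\sigma(w,t)=s(f_0(w),f_t(w))$ and define
\[
\Psi_t^w=V_1^{\sigma_1(w,t)}\circ\cdots\circ V_k^{\sigma_k(w,t)}\in\Aut(X).
\]
The functions $\sigma_j$ are holomorphic on $W\times D_r$, and the flow of a complete holomorphic (resp.\ divergence free) vector field is holomorphic in its complex time variable and volume-preserving in the $\omega$-VDP case, so $\Psi$ is a holomorphic map $D_r\to\Aut_W(X)$ (resp.\ $\Aut_W(X,\omega)$). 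Since $\oplus$ commutes with composition one has $\oplus \Psi_t^w=\Phi(\sigma(w,t),\cdot)$, which immediately gives $(\oplus \Psi_t^w)(f_0(w))=f_t(w)$; and $\sigma(w,0)=s(f_0(w),f_0(w))=0$ gives $\Psi_0^w=id$.

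The delicate step I expect to be the main obstacle is verifying the uniform $C^0$-bound (i) on $K$. The shrinking of $U_\epsilon$ needed for this must be performed before the homotopy $f$ enters the picture: by joint continuity of each flow $V_j^{s_j}$ and of its inverse $V_j^{-s_j}$ on $K\times \C$, one first picks $\delta>0$ so that $\max_j|s_j|<\delta$ implies $d(\Psi,id)<\epsilon$ and $d(\Psi^{-1},id)<\epsilon$ on $K$ for every composition of the above form; then one shrinks $U_\epsilon$ so that $\|s(y,y')\|<\delta$ on $U_\epsilon\times U_\epsilon$, which is possible because $s$ vanishes on the diagonal and is continuous. This shrinking depends only on $K$, $\epsilon$ and the chosen $V_j$'s, and is independent of $(W,f)$, as required by the statement.
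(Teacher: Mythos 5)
Your proof is correct and follows essentially the same route as the paper: both rely on Lemma~\ref{spanning}, the inverse function theorem applied to a composition of complete flows, and joint continuity of the flows for the $C^0$ estimate. The only structural difference is cosmetic---the paper defines a spray $s:\C^m\to Y$ based at the fixed point $y_0$ and then takes $\Psi_t^w=\tilde\Psi_t^w\circ(\tilde\Psi_0^w)^{-1}$ to enforce $\Psi_0^w=id$, whereas you encode the variable base point directly via the two-variable $s(y,y')$ (a local trivialization near the diagonal), so that $\Psi_0^w=id$ is automatic and the final composition with an inverse is avoided, at the cost of a slightly larger implicit function theorem application.
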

\begin{proof}
By the previous lemma, there are complete (divergence free) vector fields $V_1,\dots,V_m$ on $X$ such that $\{\oplus V_j(y_0)\}_j$ span $T_{y_0}Y$.
By discarding linearly dependent elements of the generating set, we can assume that $m=n N$, where $n$ is the dimension of $X$.
Let $\phi_j$ be the flow of $V_j$. By completeness its time-$t$ map, denoted $\phi_j^t$, is a (volume-preserving) automorphism of $X$.
Define two holomorphic maps $\phi,\phi_{-}: \C^m\times X\to X$ by 
\[
\phi(\mathbf{t},z)=\phi_1^{t_1}\circ\dots\circ \phi_m^{t_m}(z) \quad
\phi_{-}(\mathbf{t},z)=\phi_m^{-t_m}\circ\dots\circ \phi_1^{-t_1}(z).\]
and consider the holomorphic map $\varphi:\C^m\to \Aut(X)$ (or $\Aut(X,\omega)$)
given by 
\[\varphi(\mathbf{t})=\phi(\mathbf{t},\cdot):X\to X;\]
define $\varphi_{-}$ analogously.
By continuity there exists
a ball $B_R\subset \C^m$ around $\mathbf{0}$ such that for each $\mathbf{t}\in B_R$,
\[
d(\varphi(\mathbf{t}),id)<\epsilon/2 \text{ and } d(\varphi_{-}(\mathbf{t}),id)<\epsilon/2 \text{ on } K^\epsilon,
\]
where $K^\epsilon$ is a compact containing the $\epsilon$-envelope $\set{x\in X; d(x,K)<\epsilon}$ of $K$.
Consider now the map $s:\C^m\to Y$ defined by
\[
s(\mathbf{t})=(\phi(\mathbf{t},x^1),\dots,\phi(\mathbf{t},x^N)).
\]
Then $s(\mathbf{0})=y_0$ and, for all $j=1,\dots,m$,
\[
\frac{\partial s}{\partial t_j}(\mathbf{0})=(V_j(x^1),\dots,V_j(x^N))=\oplus V_j(y_0).
\]
Since $\Span\{\oplus V_j(y_0)\}_j=T_{y_0}Y$, by the implicit function theorem $s$ is locally biholomorphic on a neighborhood (which we assume contained in $B_R$) of $\mathbf{0}$ onto 
a neighborhood $U_\epsilon$ of $y_0$ in $Y$.
Then the holomorphic mapping $\psi=\varphi\circ s^{-1}:U_\epsilon\to \Aut (X)$ (or $\Aut(X,\omega)$) clearly satisfies, for each $y\in U_\epsilon$, $(\oplus\psi(y))({y_0})={y}$, and  
\[
d(\psi(y),id)<\epsilon/2 \text{ and } d(\psi^{-1}(y),id)<\epsilon/2 \text{ on }K^\epsilon.\]
Now set
\[
\tilde{\Psi}_t^w(x)=\psi(f_t(w))(x)
\]
and define $\Psi:D_r\to\Aut_W(X)$ (or $\Aut(X,\omega)$) by
$
\Psi_t^w=\tilde{\Psi}^w_{t}\circ\left(\tilde{\Psi}_0^w\right)^{-1}.
$
\end{proof}
We call such a map $\Psi:D_r\to \Aut_W(X)$ satisfying $\Psi_0^w=id$ an \emph{analytic isotopy of parametrized automorphisms}. 
Let us point out two consequences that will be of use.
In the first place, note that an analogous result holds if $f_t$ is a homotopy with $t$ varying smoothly in $\I$ instead of a complex disc:
we obtain a \emph{smooth} 
isotopy of parametrized automorphisms $\Psi:\I\to \Aut_W(X)$ satisfying the same properties. 
As a second remark, observe that the map $s$ in the above proof is defined independently of $\epsilon$, and hence so is $U$.
Therefore, if $W$ is compact and a single map $f:W\to Y$ satisfies $f(W)\subset U'\subset U$, since $s^{-1}(f(W))$ is compact in $B_{R'}\subset B_R$, for $\eta>0$ small enough and $r=1+\eta$ the function 
\[
S_t(w)=s({t}\cdot(s^{-1}\circ f(w))),\quad (w,t)\in W\times D_{r}
\]
takes values in $U'$ and defines an analytic homotopy between the constant $S_0=y_0$ and $S_1=f:W\to Y$. 
We end this section with a corollary of Theorem~\ref{YisOka}.
 \begin{corollary}\label{Oka+smooth}
Let $W$ and $X$ be as in Theorem~\ref{thm:main} or~\ref{thm:VDP}.
Then any two holomorphic maps $f_0,f_1:W\to Y_{X,N}$ which are homotopic are smoothly homotopic through holomorphic maps.
 \end{corollary}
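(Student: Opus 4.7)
The plan is to invoke the Oka--Forstneri\v c property of $Y_{X,N}$ established in Theorem~\ref{YisOka}, via the standard device of replacing the real homotopy parameter by a complex one. Set $S = W \times \C$, which is a Stein manifold since $W$ is, and consider the closed complex submanifold $T = W \times \set{0,1} \subset S$. If one can produce a single holomorphic map $g : S \to Y_{X,N}$ with $g(w, 0) = f_0(w)$ and $g(w, 1) = f_1(w)$ for every $w \in W$, then the restriction $G(w,t) = g(w,t)$ to $W \times \I$ is a smooth (indeed real-analytic) homotopy through holomorphic maps joining $f_0$ to $f_1$, which is exactly what the corollary requires.

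The first step is to build a continuous map $f : S \to Y_{X,N}$ that is already holomorphic on $T$. Let $H : W \times \I \to Y_{X,N}$ denote a continuous homotopy from $f_0$ to $f_1$, which exists by hypothesis. Extend $H$ continuously to $W \times \R$ by setting $\tilde H(w,t) = f_0(w)$ for $t \le 0$ and $\tilde H(w,t) = f_1(w)$ for $t \ge 1$, and then extend to all of $S$ by composing with the real-part projection, i.e.\ $f(w, a+ib) = \tilde H(w,a)$. The resulting map $f : S \to Y_{X,N}$ is continuous, agrees with $f_0$ on $W \times \set{0}$ and with $f_1$ on $W \times \set{1}$, and is therefore holomorphic on each connected component of $T$.

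The second step is to apply the Basic Oka Property with approximation and interpolation, recalled immediately before Theorem~\ref{YisOka}, with $S$, $T$, the trivially $\hol(S)$-convex compact $K = \emptyset$, and the map $f$ above. Under the hypotheses of Theorem~\ref{thm:main} or Theorem~\ref{thm:VDP}, Theorem~\ref{YisOka} guarantees that $Y_{X,N}$ is Oka--Forstneri\v c, so this property is available. It produces a holomorphic map $g : S \to Y_{X,N}$ with $g|_T = f|_T$, and the restriction of $g$ to $W \times \I$ is the desired smooth homotopy through holomorphic maps.

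The only substantial ingredient is Theorem~\ref{YisOka}, which has already been established; the remainder of the argument is formal, so no genuine obstacle is expected. One might wonder whether the extension of $H$ to $W \times \C$ must be done with more care, but any continuous extension of $H$ agreeing with $f_0$, $f_1$ on the respective copies of $W$ suffices, since the Oka property only uses continuity off $T$ and holomorphicity on $T$.
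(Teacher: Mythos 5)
Your proposal is correct and follows essentially the same strategy as the paper: complexify the homotopy parameter, extend the given homotopy continuously to a map that is holomorphic at $t=0,1$, apply the Basic Oka Property with interpolation on the closed complex submanifold $T=W\times\{0,1\}$, and restrict the resulting holomorphic map back to $W\times\I$. The only cosmetic difference is that the paper works on $S=W\times D_r$ with a retraction $D_r\to\I$, whereas you use $S=W\times\C$ with the real-part projection; both are Stein and the argument is unchanged.
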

 \begin{proof}
Let $Y$ be any Oka-Forstneri\v c manifold.
We prove that if $f:W\times\I\to Y$ is a homotopy between two holomorphic maps $f_0$ and $f_1$, then they are in fact homotopic via an analytic homotopy, so
in particular they are smoothly homotopic through holomorphic maps.

Let $r>1$ and $R:D_r\to \I\subset\C$ be any continuous retraction of the disc $D_r\subset\C$ onto the interval. Then
\begin{align*}
F:W\times D_r &\to Y\\
(w,t)&\mapsto f_{R(t)}(w)
\end{align*}
is a continuous map extending $f$ from $W\times \I$ to $W\times D_r$. Now $T=W\times \partial \I$ is a closed complex submanifold of the Stein manifold $S=W\times D_r$.
The map $F$ is holomorphic when restricted to $W\times \partial\I$, so according to the Basic Oka Property with interpolation (but no approximation) it can be deformed 
to a holomorphic map $H:W\times D_r\to Y$, which equals $F$ on $W\times\partial \I$.
\end{proof}
Note that this proof does not allow to obtain additionally approximation over a $\hol(W)$-convex compact piece $L$.
Compare with Corollary~\ref{stepchange} below.
\section{Proof of the Main Theorems}\label{sect4}
We will prove Theorem~\ref{thm:VDP}. A similar and simpler proof for Theorem~\ref{thm:main} can be extracted, by ignoring the complications arising from the preservation of the volume form.
We first go through some technicalities to prepare for the proof.

Let $X,Y$ and $W$ be as in Theorem~\ref{thm:VDP}. Fix from now on a distance function $d$ on $X$, an $N$-tuple $\hat{x}=(\hat{x}^1,\dots,\hat{x}^N)\in Y$, 
a holomorphic map $x_0=(x^1,\dots,x^N):W\to Y$, and a homotopy $x:W\times \I \to Y$ between $x_0$ and $x_1=\hat{x}$. 
The metric $d$ induces a natural distance in $Y$: for $\eta,\zeta\in Y$, let \[d_Y(\eta,\zeta)=\max_{j=1,\dots,N}d(\eta^{j},\zeta^{j}).\]

We will now construct automorphisms $\alpha_j\in\Aut_W(X,\omega)$ and verify that they converge to an element in $\Aut_W(X,\omega)$. 
For this we apply the following criterion, which appears in \cite[Prop. 5.1]{Finterpol} for $X=\C^n$ and $W=\{\emptyset\}$.
\begin{lemma}\label{lem:compo}
Let $X$ be a Stein manifold with metric $d$ and $W$ be any manifold. Suppose $W$ is exhausted by compact sets $L_j$ ($j\geq 1$), and $X$ by compacts $K_j$ ($j\geq 0$). 
For each $j\geq 1$, let $\epsilon_j$ be a real number such that
\[0<\epsilon_j< d(K_{j-1},X\setminus K_j) \  \text{ and }\  \sum \epsilon_j<\infty.\]
For each $j\geq m \geq 1$, let $\alpha_j\in\Aut_W(X)$, and let $\beta_{j,m}^w\in \Aut(X)$ be defined by
\[\beta_{j,m}^w=\alpha_j^w\circ\dots\circ\alpha_m^w.\]
Assume that for each $w\in L_j\setminus L_{j-1}$ (take $L_0=\emptyset$),
\begin{align}
d(\alpha^w_j,id) & <\epsilon_j && \text{ on } K_j\label{control}\\
d(\alpha^w_{j+s},id)& <\epsilon_{j+s} && \text{ on } K_{j+s}\cup\beta^w_{j+s-1,j}(K_{j+s})\quad \forall s\geq 1\label{control2}.
\end{align}
Then
$\beta=\lim_{m\to\infty}\beta_{m,1}$ exists uniformly on compacts and defines an element in $\Aut_W(X)$, or in $\Aut_W(X,\omega)$ if each $\alpha_j\in\Aut(X,\omega)$.
\end{lemma}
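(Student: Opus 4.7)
The plan is to mimic the classical Andersén-Lempert compact-convergence argument of \cite{Finterpol} (where the non-parametric analogue, $X=\C^n$ and $W$ a point, is proved), carrying the parameter $w$ throughout. There are two steps: (i) the compositions $\beta_{m,1}^w$ are uniformly Cauchy on compact subsets of $W\times X$, yielding a holomorphic limit $\beta$; (ii) that limit is an element of $\Aut_W(X)$ (or $\Aut_W(X,\omega)$), i.e.\ each $\beta^w$ is a biholomorphism of $X$ and $\omega$ is preserved when applicable.

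For (i), fix a compact $L\times K\subset W\times X$ with $L\subset L_N$ and $K\subset K_M$, and pick $M'>N$ such that $\beta_{j-1,1}^w(K)\subset K_{M'}$ for every $w\in L$ and every $j\leq N$; this is possible because each parametric $\beta_{j-1,1}$ is continuous on $W\times X$. Given $w\in L$ and $m\geq M'$, set $j:=j_0(w)\leq N$ and $s:=m+1-j\geq 1$. Then $\beta_{j-1,1}^w(K)\subset K_{M'}\subset K_{m+1}$, so
\[\beta_{m,1}^w(K)=\beta_{m,j}^w\bigl(\beta_{j-1,1}^w(K)\bigr)\subset \beta_{m,j}^w(K_{m+1}),\]
a set on which hypothesis \ref{control2} guarantees $d(\alpha_{m+1}^w,id)<\epsilon_{m+1}$. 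Hence
\[d(\beta_{m+1,1}^w(x),\beta_{m,1}^w(x))=d(\alpha_{m+1}^w(\beta_{m,1}^w(x)),\beta_{m,1}^w(x))<\epsilon_{m+1}\]
for all $(w,x)\in L\times K$, so summability of $\{\epsilon_j\}$ gives uniform Cauchy convergence and the limit $\beta(w,x)=(w,\beta^w(x))$ is holomorphic on $W\times X$.

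For (ii), the separation condition $\epsilon_j<d(K_{j-1},X\setminus K_j)$ becomes essential. It forces each $\alpha_j^w$ (in the regime where the above estimates apply) to satisfy $\alpha_j^w(K_{j-1})\subset K_j$; since $\alpha_j^w$ is $\epsilon_j$-close to the identity on $K_j$ and restricts to a biholomorphism there, a standard open-mapping / Rouché-type argument shows $(\alpha_j^w)^{-1}$ is likewise $\epsilon_j$-close to identity on $K_{j-1}$. One then runs the analogous Cauchy argument for the backward sequence $x_m:=(\beta_{m,1}^w)^{-1}(y)$, locally uniformly in $y\in X$, producing a holomorphic inverse $(\beta^w)^{-1}$ depending holomorphically on $w$. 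Preservation of $\omega$ is then automatic, since pullback of a holomorphic form commutes with locally uniform convergence: $(\beta^w)^*\omega=\lim_m(\beta_{m,1}^w)^*\omega=\omega$.

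The main obstacle is precisely the inverse-convergence step: the forward Cauchy argument alone yields only a holomorphic self-map of $W\times X$, and it is the quantitative separation $\epsilon_j<d(K_{j-1},X\setminus K_j)$ between successive levels of the exhaustions that upgrades the limit to a biholomorphism by preventing collapse, or escape to infinity, of the backward orbits. This is where the proof in \cite{Finterpol} is imported almost verbatim, the only adjustment being that one must track the compact $L\times K$ instead of a single compact in $X$, and observe that the index $j_0(w)$ is bounded on $L$ so that only finitely many ``uncontrolled'' initial factors appear in each $\beta_{m,1}^w$.
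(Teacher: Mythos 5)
Step (i), the forward Cauchy estimate on $L\times K$ with the index $j_0(w)$ tracked over the finitely many strata $L_j\setminus L_{j-1}$ meeting $L$, is correct and is essentially the estimate in the works you and the paper both cite. The gap is in step (ii): the backward sequence $x_m=(\beta_{m,1}^w)^{-1}(y)=(\alpha_1^w)^{-1}\circ\cdots\circ(\alpha_m^w)^{-1}(y)$ does \emph{not} satisfy an ``analogous'' Cauchy estimate. Indeed $x_{m+1}=(\beta_{m,1}^w)^{-1}\bigl((\alpha_{m+1}^w)^{-1}(y)\bigr)$, so the new factor $(\alpha_{m+1}^w)^{-1}$ enters \emph{innermost}: its $\epsilon_{m+1}$-displacement of $y$ is then pushed through $(\beta_{m,1}^w)^{-1}$, whose modulus of continuity is in no way controlled by the hypotheses. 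The forward telescope worked precisely because the new factor $\alpha_{m+1}^w$ was outermost, so $d(\beta_{m+1,1}^w,\beta_{m,1}^w)$ was bounded directly by $d(\alpha_{m+1}^w,id)$ on the right compact. Your degree-theoretic observation that $(\alpha_j^w)^{-1}$ is $\epsilon_j$-close to the identity on $K_{j-1}$ is true, but it makes the \emph{wrong-order} composition $(\alpha_m^w)^{-1}\circ\cdots\circ(\alpha_1^w)^{-1}$ telescope, not $(\beta_{m,1}^w)^{-1}$. This asymmetry is exactly the Fatou--Bieberbach phenomenon: under (\ref{control}) alone the $\beta_{m,1}^w$ converge on a possibly \emph{proper} open $\Omega^w\subset X$ to a biholomorphism of $\Omega^w$ onto $X$, so the fact that the limit is an honest automorphism of $X$ needs a real argument.

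The paper sidesteps this by quoting \cite[Prop.~1]{Tyson} (which repackages \cite[Prop.~5.1]{Finterpol}): under (\ref{control}) alone the limit exists exactly on the set $\Omega^w$ of points with bounded forward orbit and is a biholomorphism of $\Omega^w$ \emph{onto} $X$. It then uses (\ref{control2}) only to show $\Omega^w=X$, and for $w\in L_j\setminus L_{j-1}$ with $j\geq 2$ it applies the cited result to the tail $\beta_{m,j}^w$ and precomposes with the fixed automorphism $\beta_{j-1,1}^w$. If your proposal is read as ``invoke the non-parametric \cite[Prop.~5.1]{Finterpol} pointwise in $w$ and observe the convergence is uniform in $w$,'' it matches the paper. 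But the informal inverse-Cauchy justification you offer as a substitute is not valid as written; it would have to be replaced by the increasing-exhaustion $\Omega_m:=(\beta_{m,1}^w)^{-1}(\operatorname{int}K_m)$ and Hurwitz/degree argument underlying the cited propositions.
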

\begin{proof}
Let $w\in L_1$. The remark which is the content of \cite[Prop. 1]{Tyson} shows that if (\ref{control}) holds for all $j$, 
then the limit $\beta^w$ is injective holomorphic map onto $X$ defined on the set which consists exactly of the points $z$ in $X$ such that the sequence $\set{\beta_{m,1}^w(z);m\in \N}$ is bounded.
If we assume furthermore that 
\[
d(\alpha_{s}^w,id)<\epsilon_{s} \text{ on } K_{s}\cup\beta_{s-1,1}^w(K_{s})\quad \forall s\geq 2,
\]
which is equation (\ref{control2}) for $j=1$, we can ensure that the set of convergence for $\beta^w$ is $X$. 
Hence $\{\beta^w_{m,1}\}_m$ converges to an automorphism of $X$ if $w\in L_1$.
For $w\in L_j\setminus L_{j-1}$ and $j\geq 2$, 
the same reasoning shows that $\lim_{m\to\infty}\beta^w_{m+j,j}$ is an automorphism and we obtain $\beta^w\in\Aut(X)$ by precomposing it with the automorphism $\beta_{j-1,1}^w$.
It is clear from the construction that $\beta$ depends holomorphically on $w$, since the convergence is uniform on compacts.
\end{proof}
In practice we will construct the automorphisms $\alpha_j$ for $j\geq 1$ inductively. Observe that when defining $\alpha_j$, there are only $j$ constraints to satisfy: 
$d(\alpha_j^w,id)<\epsilon_j$ should hold 
\begin{itemize}
\item on $K_j$ if $w\in L_j\setminus L_{j-1}$, according to equation (\ref{control});
\item on $K_j\cup\beta^w_{j-1,m}(K_j)$ if $w\in L_m\setminus L_{m-1} (1\leq m\leq j-1)$, according to (\ref{control2}).
\end{itemize}
By Corollary~\ref{Oka+smooth}, we can assume that $x_0$ and $\hat{x}$ are smoothly homotopic through holomorphic maps $x_t:W\to Y$, so 
by Proposition~\ref{isotopy1} (see remarks preceding its proof) we could obtain $\alpha\in\Aut_W(X,\omega)$ mapping $x_0$ close to $\hat{x}$ over some $L\subset W$. 
Over $L$ we have a ``small homotopy'' which sends $\Gamma_{L}(\oplus\alpha\circ x_0)$ to $\hat{x}$ and on the rest of $W$ some homotopy is given.
So Proposition~\ref{isotopy1} should instead be applied to some motion coming from the ``glueing'' of these homotopies, whose holomorphic dependence on $w$ relies on the Oka property.
Its existence follows from this technical lemma.
\begin{lemma}\label{connecting2}
Let $L$ be a $\hol(W)$-convex compact set, and $f_t:W\to Y$ be a smooth homotopy between some holomorphic map $f_0$ and the constant $f_1=\hat{x}$. 
  Then there exists an $\varepsilon>0$ depending on $f$ and $L$ with the following property: 
 for every $\varepsilon'\leq \varepsilon$, every smooth $F:W\times\I\to Y$ with $F_t=f_{2t-1}$ for $t\geq 1/2$ satisfying
\begin{equation}\label{close}
d_Y(F_t(w),F_{1-t}(w))<\varepsilon'/2 \quad \forall (w,t)\in L\times [0,1],
\end{equation}
and every  $\hol(W)$-convex compact $L^-\subset\operatorname{int}(L)$, there exists an analytic homotopy $H:W\times D_r\to Y$ between $F_0$ and $\hat{x}$ such that
\[
d_Y(H_t(w),\hat{x})<\varepsilon'\quad \forall (w,t)\in L^-\times D_r.\]
\end{lemma}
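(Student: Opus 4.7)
The strategy is to apply the Basic Oka Property of $Y=Y_{X,N}$ (Theorem~\ref{YisOka}) to a suitable continuous extension $\widetilde{H}\colon W\times D_r\to Y$, with interpolation data $\widetilde H|_{W\times\{0\}}=F_0$ and $\widetilde H|_{W\times\{1\}}=\hat{x}$, and approximation data on a $\hol(S)$-convex compact $K=L^-\times\overline{D_{r_0}}$ (for some $1<r_0<r$), where $\widetilde H$ is arranged in advance to be analytic and $\varepsilon'/2$-close to $\hat{x}$. The resulting holomorphic $H\colon W\times D_r\to Y$ will then satisfy $H_0=F_0$, $H_1=\hat{x}$ and $d_Y(H_t(w),\hat{x})<\varepsilon'$ on $L^-\times\overline{D_{r_0}}$ once the Oka approximation error is chosen below $\varepsilon'/2$; relabelling $r_0$ as $r$ gives the stated conclusion.

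For the local analytic model near $\hat{x}$ I invoke the second remark following Lemma~\ref{lemma:smallauto} applied to $y_0=\hat{x}$: it provides a neighborhood $U\subset Y$ of $\hat{x}$ and a biholomorphism $s\colon B_R\to U$ with $s(0)=\hat{x}$, both independent of $\varepsilon'$. I fix $\varepsilon>0$ depending only on $f$ and $L$ so that whenever $\varepsilon'\leq\varepsilon$ the $\varepsilon'$-neighborhood of $\hat{x}$ in $Y$ lies inside $s(B_{R/3})$. The hypothesis at $t=0$ then places $F_0|_L$ inside $s(B_{R/3})$, and the formula $H^L_t(w):=s\paren{(1-t)\,s^{-1}(F_0(w))}$ defines an analytic ``short'' homotopy on $L\times D_r$ with $H^L_0=F_0|_L$ and $H^L_1=\hat{x}$; for $r>1$ close enough to $1$ one obtains $d_Y(H^L_t(w),\hat{x})<\varepsilon'/2$ throughout $L\times D_r$.

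The central step is patching $H^L$ on a neighborhood of $L^-$ with the given homotopy $F$ (reparametrized as $\widetilde F(w,t)=F_{R(t)}(w)$ via a continuous retraction $R\colon D_r\to\I$) on $W\setminus L$ into a single continuous $\widetilde H$. The two prescribed paths $t\mapsto\widetilde F_t(w)$ and $t\mapsto H^L_t(w)$ both run from $F_0(w)$ to $\hat{x}$ but are in general different: $\widetilde F$ passes through $f_0(w)$ possibly far from $\hat{x}$, while $H^L$ stays inside $U$. Here the full strength of $(\ref{close})$ is essential: it asserts that $F(w,\cdot)|_L$ is $\varepsilon'/2$-close to the visibly null-homotopic ``there-and-back'' path $t\mapsto F_{\min(t,1-t)}(w)$, so the loop $\widetilde F\cdot\overline{H^L}$ based at $F_0(w)$ is rel-endpoint null-homotopic through a homotopy staying (for $\varepsilon$ small enough) inside $U$ via the chart $s$. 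A cutoff $\chi\colon W\to\I$ supported in $\operatorname{int}(L)$ and identically $1$ near $L^-$ turns this fiberwise null-homotopy into an explicit continuous interpolation across the collar $L\setminus L^-$. The main obstacle is precisely this null-homotopy: since $Y_{X,N}$ need not be simply connected, the rel-endpoint homotopy between $\widetilde F|_{L\times\I}$ and $H^L$ cannot be taken for granted, and it is the nearly-symmetric structure of $F|_L$ imposed by $(\ref{close})$ that lets us construct it and keep it inside the local chart where the $\varepsilon'/2$ estimate survives the BOPAI approximation.
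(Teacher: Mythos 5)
Your overall blueprint matches the paper's: build a local analytic ``short'' homotopy (your $H^L$, the paper's $S$) staying $\varepsilon'/2$-close to $\hat{x}$ over $L$, glue it to $F$ by a cutoff across the collar $L\setminus L^-$, and finish with the Basic Oka Property with approximation and interpolation. The gap is in the central patching step. You assert that the loop $\widetilde F\cdot\overline{H^L}$ based at $F_0(w)$ is rel-endpoint null-homotopic \emph{through a homotopy staying inside $U$ via the chart $s$}. That cannot be true: $\widetilde F$ passes through $f_0(w)$, which is in general far from $\hat x$, so neither the loop nor any rel-endpoint null-homotopy of it can stay inside $U$. The claim is also unnecessary: the $\varepsilon'$-proximity to $\hat x$ is only needed over $L^-\times D_r$, where $\tilde H$ already equals $H^L$; the collar interpolation exists solely to glue, not to maintain the estimate.

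What actually has to be produced --- and your proposal does not produce --- is a rel-endpoint homotopy $\mathbf h\colon L\times\I_s\times\I_t\to Y$ from $F|_{L\times\I}$ to the short analytic homotopy that is \emph{jointly continuous} in $(w,s,t)$. Its existence for each fixed $w$ is not automatic (as you note, $Y_{X,N}$ can have nontrivial fundamental group), and continuity in $w$ requires a canonical construction. The paper obtains it from (\ref{close}) by defining $h^w_s$ to follow $F$ on $[0,s/2]$, jump across the \emph{unique} minimizing geodesic from $F_{s/2}(w)$ to $F_{1-s/2}(w)$ (short by hypothesis), and follow $F$ on $[1-s/2,1]$; at $s=0$ this gives the short geodesic from $F_0(w)$ to $\hat x$, which is then further homotoped to $\sigma = S|_{L\times\I}$ inside $U$. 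This is precisely why the paper's $\varepsilon$ also incorporates a lower bound on the injectivity radius of $(Y,d_Y)$ over the compact $f(L\times\I)$ --- which is the genuine source of the dependence of $\varepsilon$ on ``$f$ and $L$'' in the statement. Your $\varepsilon$ only controls the size of $U$ and does not guarantee uniqueness of the connecting geodesics, so it does not deliver continuity in $w$. Supplying the explicit geodesic interpolation (and accordingly strengthening your $\varepsilon$) would close the gap.
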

\begin{proof}
The injectivity radius for the metric $d_Y$ is bounded from below by a positive constant on the compact $f(L\times \I)$. 
We let $\varepsilon$ be the minimum of this constant and of the radius (in the metric $d_Y$) of the open set $U$ mentioned in the second remark following Lemma~\ref{lemma:smallauto}. 
Fix $\varepsilon'\leq\varepsilon$ and let $F:W\times\I:\to Y$ be as above. Then $F_0(L)\subset Y$
lies in a certain $B_{\varepsilon'/2}\subset U$, so according to that remark 
there is an analytic homotopy $S:L\times D_R\to Y$ between $S_0=F_0$ and $S_1=\hat{x}$ satisfying
\[
d_Y(S_t(w),\hat{x})<\varepsilon'/2\quad \forall (w,t)\in L\times D_R.
\]
Denote by $\sigma$ the restriction of $S$ to $L\times\I$. 
We claim that there
is a continuous $\mathbf{h}:L\times\I_s \times \I_t \to Y$ such that 
\begin{center}
$\begin{array}{ll}
\mathbf{h}(w,0,t)=F_{t}(w) & \mathbf{h}(w,s,0)=\sigma_0(w)\\
\mathbf{h}(w,1,t)=\sigma_t(w) & \mathbf{h}(w,s,1)=\sigma_1(w).
\end{array}$            
\end{center}
Consider $w\in L$ fixed. By the definition of $\varepsilon$, for each $s\in\I$ there is a unique geodesic path in $Y$ from $F_s(w)$ to $F_{1-s}(w)$. By following it at constant speed, 
the parametrization $\gamma^w_s:\I_t\to Y$ is uniquely determined. Let $h^w:\I_s\times\I_t\to Y$ be defined by
\begin{align*}
 h^w_s(t)=\begin{cases}
        F_{t}(w) &\text{ if }0\leq t\leq s/2 \\
	\gamma^w_{s/2}(l(t)) &\text{ if } s/2 \leq t\leq 1-s/2 \\
	F_t(w) &\text{ if } 1-s/2\leq t\leq 1,
        \end{cases}
\end{align*}
where $l$ is the linear function of $t$ taking values $0$ at $s/2$ and $1$ at $1-s/2$. This is a well-defined homotopy between $F$ and the geodesic segment $h_0^w$ going from $F_0(w)$ to the constant $\hat{x}$;
 it is uniquely defined for each $w$.
By letting $w$ vary in $L$, all the elements in the definition of $h_s(t)$ vary continuously, so $h:L\times\I\times\I\to Y$ provides a homotopy of homotopies between $F$ and the geodesic segment $h_0$.
Now it suffices to connect $\sigma:L\times \I\to Y$ to $h_0:L\times\I\to Y$ and compose; this is achieved in a similar way, and the claim is proved.
 
Let $\rho:D_R\times \I \to D_R$ be a homotopy between the identity $\rho_0$ and a continuous retraction $\rho_1:D_R\to \I$, 
and extend $\mathbf{h}$ to $L\times \I_s\times D_R$ by defining
  \begin{align*}
     \mathbf{H}(w,s,t)=
    \begin{cases}
\mathbf{h}(w,2s,\rho_1(t)) &\text{if } 0\leq s \leq 1/2\\
S(w,\rho_{2-2s}(t)) &\text{if } 1/2\leq s\leq 1.
    \end{cases}
\end{align*}
Let $U$ be a neighborhood of $L^-$ such that $\overline{U}\subset\operatorname{int}(L)$. 
Then there exists a smooth function $\chi:W\to\I$ such that $\chi|_{U}=1$ and $\chi|_{W\setminus L}=0$.
Define $\tilde{H}:W\times D_R \to Y$ by 
  \begin{align*}
     \tilde{H}(w,t)=
    \begin{cases}
      \mathbf{H}(w,\chi(w),t) & \text{if } w\in L,\\
      F_{\rho_1(t)}(w) & \text{if } w\notin L.
    \end{cases}
\end{align*}
Consider the inclusion of the closed complex submanifold $T=W\times \partial\I$ into the Stein manifold $\mathfrak{S}=W\times D_R$. 
The map $\tilde{H}$ is continuous on $\mathfrak{S}$, restricts to the holomorphic maps $\sigma_0$ and $\sigma_1$ on $T$, and is equal to the holomorphic $S$ on a neighborhood of the $\hol(\mathfrak{S})$-convex set $L^-\times \overline{D_{r}}$ (for some $1<r<R$). 
By the Oka Property, there is a holomorphic map ${H}:\mathfrak{S}\to Y$ which restricts to $\tilde{H}$ on $T$ and approximates $\tilde{H}$ on $L^-\times {D_{r}}$.
 \end{proof}
Let us illustrate how we will use this lemma.
\begin{corollary}\label{stepchange}
Let $f:W\times \I\to Y$ be a smooth homotopy through holomorphic maps connecting $f_0$ to some constant $f_1=\hat{x}$. 
Given a $\hol(W)$-convex compact $L$ and $\epsilon>0$ small enough,
there exists $\alpha\in\Aut_W(X,\omega)$ and a smooth homotopy through holomorphic maps $h:W\times \I\to Y$ with $h_0=\oplus \alpha \circ f_0$, $h_1=\hat{x}$, and
\[
d_Y(h_t(w),\hat{x})<\epsilon\quad \forall (w,t)\in L\times \I.
\]
\end{corollary}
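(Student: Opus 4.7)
The approach is to combine Proposition \ref{isotopy1} (applied to the $Y_{X,N}$-valued homotopy $f$, via the remark following its statement) with the gluing Lemma \ref{connecting2}. First, enlarge $L$ slightly to a $\hol(W)$-convex compact $\tilde L$ with $L \subset \operatorname{int}(\tilde L)$. For any prescribed $\delta > 0$, Proposition \ref{isotopy1} produces a smooth family $A_t \in \Aut_W(X, \omega)$, $A_0 = \mathrm{id}$, such that
\[
d_Y(\oplus A_t^w \circ f_0(w), f_t(w)) < \delta \quad \text{for all } (w,t) \in \tilde L \times \I.
\]
In particular, setting $\alpha := A_1$, the candidate $\oplus \alpha \circ f_0$ for $h_0$ satisfies $d_Y(\oplus\alpha^w f_0(w), \hat x) < \delta$ on $\tilde L$.

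Next, assemble a smooth homotopy $F : W \times \I \to Y$ from $F_0 = \oplus \alpha \circ f_0$ to $F_1 = \hat x$ by
\[
F_t = \begin{cases} \oplus A_{1-2t} \circ f_0 & 0 \leq t \leq 1/2, \\ f_{2t-1} & 1/2 \leq t \leq 1. \end{cases}
\]
After a preliminary reparameterization of $f$ and $A$ in $t$ so that all $t$-derivatives vanish at the endpoints $t=0$ and $t=1$ respectively (this does not alter the conclusions of Proposition \ref{isotopy1} nor the form needed for Lemma \ref{connecting2}), $F$ is smooth through $t = 1/2$. For each $t \in \I$, the unordered pair $\{F_t(w), F_{1-t}(w)\}$ equals $\{\oplus A_s^w f_0(w),\, f_s(w)\}$ with $s = |1-2t| \in \I$, so
\[
d_Y(F_t(w), F_{1-t}(w)) < \delta \quad \text{for all } (w, t) \in \tilde L \times \I.
\]

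Let $\varepsilon = \varepsilon(f, \tilde L)$ be the constant produced by Lemma \ref{connecting2}. Assuming $\epsilon \leq \varepsilon$ (this is the ``$\epsilon$ small enough'' hypothesis of the corollary) and choosing $\delta < \epsilon/2$, Lemma \ref{connecting2} (with $L \to \tilde L$, $L^- \to L$, and $\varepsilon' = \epsilon$) yields an analytic homotopy $H : W \times D_r \to Y$ between $F_0 = \oplus \alpha \circ f_0$ and $\hat x$ with $d_Y(H_t(w), \hat x) < \epsilon$ for all $(w, t) \in L \times D_r$. Restricting $H$ to $W \times \I \subset W \times D_r$ yields the required smooth homotopy $h$ through holomorphic maps.

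The one genuinely technical point is the smoothness of $F$ at $t = 1/2$: generically the left and right one-sided derivatives there are $-2(\dot A_1 \circ f_0)$ and $2 \dot f_0$ respectively, which need not agree. This is circumvented by the flattening reparameterization mentioned above, which makes all derivatives vanish at the matching point. Modulo this routine device, the argument is simply the concatenation of Proposition \ref{isotopy1} (producing $\alpha$ and moving $f_0$ near $\hat x$ on $L$) with Lemma \ref{connecting2} (straightening the resulting zig-zag into a small analytic homotopy $h$ close to $\hat x$).
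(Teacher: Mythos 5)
Your proof is correct and follows essentially the same route as the paper's: enlarge $L$ to a $\hol(W)$-convex compact with $L$ in its interior, apply Proposition~\ref{isotopy1} to get $A_t$ with $\alpha = A_1$, form the concatenated zig-zag homotopy $F$, verify the closeness condition (\ref{close}), and then invoke Lemma~\ref{connecting2} with the larger compact in the role of $L$ and the original $L$ in the role of $L^-$, restricting the resulting analytic homotopy $H$ to $\I$. Your extra remark on flattening the $t$-parametrization to ensure smoothness of $F$ at $t=1/2$ is a reasonable and standard precaution that the paper passes over silently (minor aside: the one-sided derivative from the left involves $\dot A_0$, not $\dot A_1$, since $1-2t=0$ at $t=1/2$).
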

\begin{proof}
Pick a $\hol(W)$-convex compact $L^+$ such that $L\subset \operatorname{int}(L^+)$ and let $0<\epsilon<\varepsilon(f,L^+)$ where $\varepsilon$ is as in the lemma.
By Proposition~\ref{isotopy1}, there is $A_t\in\Aut_W(X,\omega)$ depending smoothly on $t$ such that $A_0=id$ and 
\[d_Y(\oplus A_t^w\circ f_0(w),f_t(w))<\epsilon/2 \quad \forall (w,t)\in L^+\times\I.\]
Let $\alpha=A_1$. Define $F:W\times \I\to Y$ by
\begin{equation*}
F_t(w) = 
    \begin{cases}
      \oplus A_{1-2t}^w\circ f_0(w) & \text{if } t \leq 1/2\\
      \ f_{2t-1}(w) & \text{if } t \geq 1/2.
    \end{cases} 
\end{equation*}
This is a smooth homotopy between the holomorphic map $\oplus \alpha\circ f_0$ and $\hat{x}$. 
By the above inequality $F_t$ satisfies (\ref{close}), 
so the lemma yields the desired homotopy by restricting $H$ to $\I$.
\end{proof}
We now prove the main technical tool, which roughly said allows us to iterate the approximations over a growing sequence of compacts in $W$.
\begin{proposition}\label{tricky}
Let $\eta>0$ and $K$ a compact in $X$ containing each $\hat{x}^j$  be given. Then there exists a real number $\delta(K,\eta)>0$
with the following property. If $h:W\times \I \to Y$ is a smooth homotopy  through holomorphic maps, with $h_1=\hat{x}$ and approximation
\[
d_Y(h_t(w),\hat{x})<\delta(K,\eta)\quad \forall (w,t)\in L_1 \times\I,
\]
where $L_1\subset W$ is a $\hol(W)$-convex compact,  then: \\
(a) There exists a smooth isotopy of parametrized automorphisms $\Psi:\I\to \Aut_{L_1}(X,\omega)$, such that for all $(w,t)\in L_1\times \I$,
\begin{align}
 \oplus \Psi_t^w\circ h_0(w)&=h_t(w),\nonumber\\
d(\Psi_t^w,id)&<\eta \text { on } K.\label{control4}
\end{align}
(b) Given $\epsilon>0$, $L_2$ a $\hol(W)$-convex compact containing $L_1$, and a $\hol(X)$-convex compact $C$, there exists a smooth isotopy 
$A_t\in\Aut_W(X,\omega)$ with $A_0=id$ such that
\begin{equation}\label{co1}
d(A_t^w(z),\Psi_t^w(z))<\eta \quad \forall (w,z,t)\in L_1\times C\times \I 
\end{equation}
and \[d_Y(\oplus A_t^w\circ h_0(w),h_t(w))<\epsilon \text{ on } L_2\times\I.\]
\end{proposition}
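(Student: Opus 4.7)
For part (a), the plan is to apply the smooth-$\I$ version of Lemma~\ref{lemma:smallauto} (the first remark after that lemma) with $y_0=\hat{x}$, compact $K$, and tolerance $\eta$: this produces a neighborhood $U_\eta\subset Y$ of $\hat{x}$. Choose $\delta=\delta(K,\eta)>0$ so that the open $d_Y$-ball of radius $2\delta$ about $\hat{x}$ is contained in $U_\eta$. Under the closeness hypothesis, uniform continuity of $h$ on $L_1\times\I$ together with $\hol(W)$-convexity of $L_1$ produce an open Stein neighborhood $W_1\subset W$ of $L_1$ with $h_t(W_1)\subset U_\eta$ for every $t\in\I$. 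Applying Lemma~\ref{lemma:smallauto} with parameter manifold $W_1$ and data $f_t=h_t|_{W_1}$ then yields a smooth isotopy $\Psi\colon\I\to\Aut_{W_1}(X,\omega)$ with $\Psi_0^w=\mathrm{id}$, $\oplus\Psi_t^w\circ h_0(w)=h_t(w)$, and $d(\Psi_t^w,\mathrm{id})<\eta$ on $K$; restricting to a neighborhood of $L_1$ gives~(a).

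For part (b), set
\[
K=(L_1\times C)\cup\bigcup_{j=1}^{N}\Gamma_{L_2}(h_0^j)\subset W\times X.
\]
This compact is $\hol(W\times X)$-convex: each $w$-fiber is one of $\{h_0^j(w)\}_j$, $C$, or $C\cup\{h_0^j(w)\}_j$, and hence $\hol(X)$-convex, while $\pi_W(K)=L_2$ is $\hol(W)$-convex. The plan is to build a smooth parametric isotopy $F_t\colon U\to W\times X$ of injective, $\omega$-volume-preserving holomorphic maps of the form $(\star)$ with $F_0=\mathrm{id}$, on an open neighborhood $U$ of $K$, such that $F_t^w=\Psi_t^w$ for $w$ in some open neighborhood $V$ of $L_1$ with $\overline{V}\subset\operatorname{int}(L_2)$ (so $U_w=X$ on $V$), and $F_t^w(h_0^j(w))=h_t^j(w)$ for all $w\in L_2$ and all $j$. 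A mild extension of Proposition~\ref{isotopy2}, allowing the graph piece to be $N$ disjoint graphs in place of a single one (the proof goes through component-by-component, since Poincar\'e's lemma and the \v{C}ech computation are local), then produces $A_t\in\Aut_W(X,\omega)$ depending smoothly on $t$ with $A_0=\mathrm{id}$ and approximating $F_t$ uniformly on $K$ to any prescribed precision. Since $F_t=\Psi_t$ on $L_1\times C$ and $F_t^w(h_0^j(w))=h_t^j(w)$ on $L_2$, choosing the tolerance less than $\min(\eta,\epsilon)$ yields both required estimates simultaneously.

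The main obstacle is the construction of $F_t$. Over $V\times X$ one simply sets $F_t=\Psi_t$. Over $L_2\setminus V$ one constructs a local isotopy on a fiberwise contractible neighborhood of $\bigcup_j\Gamma_{L_2\setminus V}(h_0^j)$ as in the proof of Proposition~\ref{isotopy1}: holomorphically $w$-varying charts along each curve $t\mapsto h_t^j(w)$ in which $\omega$ is in standard form, together with time-dependent $\omega$-divergence-free vector fields realizing the prescribed motion. Matching the two prescriptions on the overlap amounts to extending the generator $\Theta_t^w=\dot\Psi_t^w\circ(\Psi_t^w)^{-1}$, a holomorphic $\omega$-divergence-free vector field on $V\times X$, compatibly to a fiberwise contractible neighborhood of the graphs over $L_2$. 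This extension is executed at the level of primitive $(n-2)$-forms for the closed $(n-1)$-form $i_{\Theta_t^w}\omega$, by the same \v{C}ech--Cartan argument used in the proof of Proposition~\ref{isotopy2}: contractibility of $X$ yields a global primitive over $V\times X$, and Cartan's theorem B applied to the covering of $U$ by $V\times X$ and the graph neighborhood handles the glueing and the smooth dependence on $t$.
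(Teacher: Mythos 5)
Your part (a) matches the paper's argument, which simply invokes the first remark after Lemma~\ref{lemma:smallauto}, so that part is fine.

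Part (b), however, has a genuine gap in the construction of the local isotopy $F_t$. You prescribe $F_t=\Psi_t$ on $V\times X$ and then separately construct a second isotopy on a tube around the graphs of $h_0^j$ over (a neighborhood of) $L_2\setminus V$, and you propose to ``match'' them by the \v{C}ech--Cartan argument of Proposition~\ref{isotopy2}. That argument requires the $(n{-}2)$-form primitives $\beta_A,\beta_B$ on the two pieces of the covering to have \emph{the same} differential $\eta_t=i_{\Theta_t}\omega$ on the overlap; only then is $\beta_A-\beta_B$ closed and the cocycle-splitting step legitimate. In your situation the two pieces of the cover carry two genuinely different divergence-free vector fields: $\Theta_\Psi$ coming from $\Psi_t$ on $V\times X$, and the ad hoc local field near the graph over $L_2\setminus V$. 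They agree only along the graph, not on the rest of the overlap, so $d\beta_A\neq d\beta_B$ there and $\beta_A-\beta_B$ fails to be closed. Nothing in Cartan's theorem B will reconcile this; the gluing you want is not a sheaf-cohomology vanishing statement.

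There is a second, related difficulty you do not address: the constraint on the generating field is time-dependent, $\Theta_t\bigl(h_t^j(w)\bigr)=\dot h_t^j(w)$, so the analytic set along which the data are prescribed is the \emph{moving} graph $\Gamma(h_t)$. Extending a field (or a primitive $(n{-}2)$-form) from a moving analytic set with smooth dependence on $t$ is exactly the issue the paper's proof confronts. It is handled by first producing an auxiliary isotopy $\beta_t\in\Aut_{L_2}(X,\omega)$ (via Proposition~\ref{isotopy1} followed by Lemma~\ref{lemma:smallauto}) that straightens $\Gamma_{L_2}(h_t)$ to the constant graph $\Gamma_{L_2}(\hat x)$, then extending the pushed-forward field $(\beta_t)_*\Theta_t$ from the \emph{fixed} analytic set $\Gamma_{L_2}(\hat x)$ (together with $L_1\times X$) to a neighborhood with arbitrary approximation on $L_1\times\tilde K$ and smooth $t$-dependence, using Bishop's extension theorem in the $t$-parametrized Cartan A/B framework, and finally pulling back by $\beta_t$. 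The resulting single field is then, by construction, close to $\Theta_\Psi$ on $L_1\times C$ and correct on the graph; there is no gluing of competing prescriptions. Only after this step does Proposition~\ref{isotopy2} enter, to convert $F_t$ into a global $A_t\in\Aut_W(X,\omega)$ — your final step is right, but the hard part preceding it is missing.
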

\begin{proof} 
(a) The existence of $\delta(K,\eta)$ and the volume-preserving $\Psi_t$ with these properties follows immediately from the first remark following Lemma~\ref{lemma:smallauto}.\\
(b) Define a time-dependent vector field on $L_1\times X$ by
\[
\Theta_s^w(x)=\frac{d}{dt}\bigg|_{t=s}\Psi_t^w\left( (\Psi_s^w)^{-1}(x)\right).
\]
It satisfies, for each $j=1,\dots,N$ and $s\in \I$,
\[
\Theta_s^w(h_s^j(w))=\frac{d}{d t}\bigg|_{t=s}h_t^j(w),
\]
which implies that $\Theta_s^w(x)$ is a vector field on $(L_1\times X)\cup \Gamma_W(h_s)$. 
We will show that, for each $s$, this field can be extended to a neighborhood of $(L_1\times X)\cup \Gamma_W(h_s)$ with approximation on $L_1\times C$.

There is a smooth isotopy of parametrized automorphisms $\beta_t\in\Aut_{L_2}(X,\omega)$ such that $\beta_1=id$ and $(\oplus \beta_t)\circ h_t=\hat{x}$. Indeed, 
 Proposition~\ref{isotopy1} applied to $h_t$ provides $\tilde{B}_t\in\Aut_W(X,\omega)$, depending smoothly on $t$, with the property that 
${B}_t=\tilde{B}_1\circ \tilde{B}_t^{-1}$ maps  $\Gamma_{L_2}(h_{t})$ arbitrarily close to  $\Gamma_{L_2}(\hat{x})$. Hence Lemma~\ref{lemma:smallauto} applied to $\oplus B_t \circ h_t:W\to Y$ gives
elements $\Phi_t\in\Aut_{L_2}(X,\omega)$, depending smoothly on $t$,
such that
\[
\oplus (\Phi_t^w \circ  B_t^w)\circ h_t(w)=\oplus B_0^w\circ h_0(w)\quad \forall w\in L_2.
\]
Then ${\Phi_1}^{-1}\circ\Phi_t\circ  B_t\in\Aut_{L_2}(X,\omega)$ is the desired $\beta_t$.\\
The pushforwards $(\beta_t)_*(\Theta_t)$ define together a divergence free time-dependent vector field on $(L_1\times X)\cup \Gamma_{L_2}(\hat{x})$. 
Just as in the proof of Proposition~\ref{isotopy1}, this can be extended from the analytic subvariety $\Gamma_{L_2}(\hat{x})$ to a neighborhood of it, and moreover it is a 
classical result of E. Bishop \cite{Bishop} following from Cartan's theorem A and B that this can be done with smooth dependence on the $t$ parameter and with
arbitrary approximation on a large $\hol(W\times X)$ compact of the form $L_1\times \tilde{K}$, where $\tilde{K}$  contains
\begin{equation}\label{cupnotation}
\beta_{\I}^{L_1}(C)=\set{\beta_t^w(x); w\in L_1, x\in C, t\in\I}\subset X .
\end{equation}
Its pullback is an approximate extension of the time-dependent vector field $\Theta$ above, whose flow provides
 an isotopy of injective
volume-preserving  holomorphic maps $F_t:\Omega\to W\times X$, 
where $\Omega$ is a neighborhood of $\Gamma_{L_2}(h_0)$ containing $L_1\times X$, and
 such that 
\begin{align}
d(F_t^w(z),\Psi_t^w(z))<\eta/2\quad &\forall (w,z,t)\in L_1\times C\times \I \label{t1}\\
\oplus F_t^w \circ h_0(w)=h_t(w) \quad  &\forall  (w,t)\in L_2\times\I.\label{t2}
\end{align}
Observe that in fact $\Psi^w$ is defined for $w$ in a neighborhood of $L_1$, so we may apply Proposition~\ref{isotopy2}. We obtain $A_t\in\Aut_W(X,\omega)$ such that
\[
d(A_t^w(z),F_t^w(z))<\min(\epsilon,\eta/2)
\]
on $(L_1\times C) \cup \Gamma_{L_2}(h_0)$. This and (\ref{t1}) show that (\ref{co1}) holds. Furthermore, by (\ref{t2}), 
\[
d_Y(\oplus A_t^w \circ h_0(w),h_t(w))<\epsilon \quad \forall(w,t)\in L_2\times\I.  \qedhere
\]
\end{proof}
\begin{proof}\textit{(of Theorem~\ref{thm:VDP})}
The ``only if'' part follows from the  definition of the path connected component; we have to prove the ``if'' part. 
Fix a compact exhaustion of $W\times X$, of the form $W=\bigcup_{j=1}^\infty L_j$ and $X=\bigcup_{j=0}^\infty K_{j}$,
 where each $L_j$ (resp. $K_j$) is a $\hol(W)$-convex (resp. $\hol(X)$-convex) compact set, and such that $L_j\subset\operatorname{int}(L_{j+1})$. 
Fix also real numbers $\epsilon_j$ 
($j\geq 1)$ such that $0<\epsilon_j<d(K_{j-1},X \setminus K_{j})$ and $\sum \epsilon_j<\infty$. 
We can suppose that $K_0$ contains $\hat{x}^{j}$ for all $j=1,\dots,N$.

By Corollary~\ref{Oka+smooth}, $x_0$ and $\hat{x}$ are smoothly homotopic through holomorphic maps. Hence Corollary~\ref{stepchange} gives
$\alpha_0\in\Aut_W(X,\omega)$ and a smooth homotopy of holomorphic maps $h:W\times \I\to Y$ between $h_0=\oplus \alpha_0\circ x_0$ and $h_1=\hat{x}$ with
\[
d_Y(h_t(w),\hat{x})<\delta(K_1,\epsilon_1/2) \quad\forall (w,t)\in L_1\times \I,
\]
where $\delta>0$ is as in Proposition~\ref{tricky}. Apply part (a) of it to $h_t$: we obtain some $\Psi:\I\to\Aut_{L_1}(X,\omega)$. Consider the compact
\[\Psi_{\I}^{L_1}(K_2)\]
(recall the notation from equation (\ref{cupnotation})) and define $C_1$ to be a $\hol(X)$-convex compact containing its $(\epsilon_1/2)$-envelope.
By part (b) of Proposition~\ref{tricky}, we obtain a smooth isotopy of automorphisms $A_t\in\Aut_W(X,\omega)$ with $A_0=id$ such that
\[
d_Y(\oplus A_t^w \circ h_0(w),h_t(w))<\min(\epsilon_1,\delta(C_1,\epsilon_2/2),\varepsilon(h,L_3))/2\quad\forall (w,t)\in L_3\times \I, 
\]
where $\varepsilon$ is as in Lemma~\ref{connecting2}.
Combining (\ref{control4}) and (\ref{co1}) shows
\[
d(A_t^w(z),z)<\epsilon_1 \quad\forall (w,z,t)\in L_1\times K_1\times\I.
\]
We let $\alpha_1=A_1$. Then in particular
\[
d(\alpha^w_1,id)<\epsilon_1 \text{ on }L_1\times K_1.
\]
Thus $\alpha_1$ satisfies the only condition imposed by (the remark following) Lemma~\ref{lem:compo}. Observe finally that by (\ref{co1}), $\alpha_1^w(K_2)\subset C_1$ for $w\in L_1$.

We now construct inductively $\alpha_j$ for $j\geq 2$.  
Fix $k\geq 1$ and assume that we have defined $C_j\subset X$ and $\alpha_j\in \Aut_W(X,\omega)$, for all $1\leq j\leq k$, 
such that the following conditions hold (recall that $\beta_{j,m}^w=\alpha_j^w\circ\dots\circ\alpha_m^w$ and $\beta_j=\beta_{j,0}$):
\begin{itemize}
\item[(a)] $\alpha_j$ is smoothly isotopic to the identity through some $A_t\in\Aut_W(X,\omega)$; 
\item[(b)] $d_Y(\oplus A_t^w \circ h_0(w),h_t(w)) < \min(\epsilon_j,\delta (C_j,\epsilon_{j+1} /2),\varepsilon(h,L_{j+2}))/2$ for all $(w,t)\in{L_{j+2}}\times\I$,
where $h:W\times \I\to Y$ is a smooth homotopy between $\oplus \beta_{j-1}\circ x_0$ and $\hat{x}$;
\item[(c)] $C_j$ contains $K_{j+1}$, and $\set{\beta^w_{j,m}(K_{j+1});w\in L_m\setminus L_{m-1}}$ for every $1\leq m\leq j$;
\item[(d)] and every $A_t^w$ satisfies the $j$ conditions of Lemma~\ref{lem:compo}, that is, for every $1\leq m\leq j$, if $w\in L_m\setminus L_{m-1}$, then $d(A^w_t,id)<\epsilon_j$ on $K_j\cup \beta^w_{j-1,m}(K_j)$.
\end{itemize}
We have just verified that these conditions hold for $k=j=1$. Let $j\geq 1$.
It suffices to show that $\alpha_{j+1}$ and $C_{j+1}$ can be constructed satisfying the above conditions: 
indeed, by condition (d), Lemma~\ref{lem:compo} would imply that $\beta=\lim_{j\to \infty} \beta_{j,1}\in \Aut_W(X,\omega)$ exists, 
and by construction (since $\epsilon_j\to 0$) $\oplus\beta$ maps $\alpha_0\circ x_0$ to $\hat{x}$, so $\beta\circ \alpha_0\in \Aut_W(X,\omega)$ would be the simultaneous standardization.
Further, by conditions (a) and (d), $\beta\circ\alpha_0$ lies in  $(\Aut_W(X,\omega))^0$. 

So let $A$ and $h$ be as in conditions (a) and (b) at step $j$. By the inequality in condition (b), and since $L_{j+1}\subset\operatorname{int}(L_{j+2})$, we can apply Lemma~\ref{connecting2} to 
  \begin{align*}
     {F}_t(w)=
    \begin{cases}
      \oplus A_{1-2t}^w \circ h_0(w) &\text{if } t\leq 1/2\\
      h_{2t-1}(w) &\text{if } t\geq 1/2.
    \end{cases}
\end{align*}
We obtain a smooth homotopy through holomorphic maps $H:W\times\I\to Y$, such that $H_0=\oplus\beta_j\circ x_0$ and $H_1=\hat{x}$ and for all $t\in\I$,
\[
d_Y(H_t(w),\hat{x})<\delta(C_{j},\epsilon_{j+1} /2)\quad \forall w\in L_{j+1}.
\]

By the first part of Proposition~\ref{tricky} there is a smooth isotopy
\[
\Psi:\I\to \Aut_{L_{j+1}}(X,\omega)
\]
with $\oplus \Psi_t^w\circ H_0(w)=H_t(w)$ and 
\begin{equation}\label{I}
d(\Psi_t^w,id)<\epsilon_{j+1}/2 \text{ on } L_{j+1}\times C_{j}.
\end{equation}
Define $C_{j+1}$ to be a $\hol(X)$-convex compact containing the $(\epsilon_{j+1}/2)$-envelope of 
\begin{equation}\label{env}
C_j\cup \Psi_{\I}^{L_{j+1}}(K_{j+2})\cup \bigcup_{1\leq m\leq j}\Psi_{\I}^{L_m}(\beta_{j,m}(K_{j+2})).
\end{equation}
By the second part of Proposition~\ref{tricky}, there are  $A_t^w\in\Aut_{W}(X,\omega)$ smoothly depending on $t$ and with $A_0=id$ such that
\begin{align}
d(A^w_t,\Psi_t^w) &< \epsilon_{j+1}/2  \text{ on } L_{j+1}\times C_{j+1}\times\I \label{III} \\
d_Y(\oplus A_t^w\circ H_0(w),H_t(w))&<\min(\epsilon_{j+1},\delta(C_{j+1},\epsilon_{j+2}/2),\varepsilon(H,L_{j+3}))/2 \text{ on }L_{j+3} \times\I\label{IIII}.
\end{align} 
Define $\alpha_{j+1}=A_1$, so condition (a) of the induction is met at step $j+1$. Equation (\ref{IIII}) means 
that condition (b) is also satisfied.

Let us check that condition (d) holds at step $j+1$. Note that by  (\ref{III}) and (\ref{I})
\[
d(A^w_t,id)<d(A_t^w,\Psi_t^w)+d(\Psi^w_t,id)<\epsilon_{j+1} \text{ on }L_{j+1}\times C_j.
\]
By condition (c), $C_j$ contains $K_{j+1}\cup \beta_{j,m}^w(K_{j+1})$ for any $w\in L_m\setminus L_{m-1}$, where $1\leq m\leq j+1$, so
 $d(A_t^w,id)<\epsilon_{j+1}$ on $K_{j+1}$. 

It remains to show that $C_{j+1}$ satisfies condition (c). Since $\Psi_0=id$, it contains $K_{j+2}$. Let $1\leq m\leq j+1$, $w\in L_{m}\setminus L_{m-1}$  and $z\in K_{j+2}$. 
By the definition of $C_{j+1}$, it suffices to check that 
\begin{equation}\label{final}
d(\beta^w_{j+1,m}(z),z')<\epsilon_{j+1}/2
\end{equation}
where $z'$ is some element of the compact (\ref{env}). 
If $m=j+1$, pick $z'=\Psi_1^w(z)$. Then (\ref{final}) follows from (\ref{III}).
If $m<j+1$, let $z'=\Psi_1^w(\beta_{j,m}^w(z))$, which belongs to (\ref{env}). Then
\[
d(\beta^w_{j+1,m}(z),z')=d(\alpha_{j+1}(\beta^w_{j,m}(z)),\Psi_1^w(\beta_{j,m}^w(z)))<\epsilon_{j+1}/2
\]
where the inequality again follows from (\ref{III}), since $\beta_{j,m}^w(z)\in C_{j+1}$. The induction is complete.
\end{proof}

\section{Concluding Remarks}\label{last}
In this section we change slightly our point of view. With $W$ and $X$ as before, we consider $\Hol(W,Y_{X,N})$, the space of $N$ parametrized points in $X$.
We identify the group $\Aut_W(X)$ with the group of holomorphic mappings from $W$ to $\Aut(X)$, which we denote by $G=\Hol(W,\Aut(X))$. We naturally get an identification between
$G_0$, the path-connected component of the identity in $G$, with $(\Aut_W(X))^0$.
The group $G$ acts on the space $\Hol(W,Y_{X,N})$ by
\[
(\alpha\cdot x)(w)=(\oplus \alpha(w))\circ x(w)
\]
where $x=(x^1,\dots,x^N)\in\Hol(W,Y_{X,N})$ as before. It also acts on the space of homotopy classes (or path-connected components), which we denote here by $[\Hol(W,Y_{X,N})]$. 
Since the path-connected component $G_0$ of the identity in $G$ acts trivially, we get an action of $G/G_0$, the space of homotopy classes  $[\Hol(W,\Aut(X))]$ of holomorphic maps from $W$ to $\Aut(X)$, on  $[\Hol(W,Y_{X,N})]$.
Then an immediate consequence of Theorem~\ref{thm:main} can be phrased as follows.
\begin{corollary}
Any $x\in\Hol(W,Y_{X,N})$ is simultaneously standardizable if and only if $G/G_0$ acts transitively on $[\Hol(W,Y_{X,N})]$.
\end{corollary}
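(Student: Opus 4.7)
The plan is to observe that the corollary follows formally from Theorem~\ref{thm:main}, once the constant map $\hat{x}\colon w\mapsto(z^1,\dots,z^N)$ is taken as a distinguished basepoint in $\Hol(W,Y_{X,N})$: by the very definition recalled in the introduction, a parametrized point $x$ is simultaneously standardizable precisely when there exists $\alpha\in G$ with $\alpha\cdot x=\hat{x}$. All the analytic and topological content sits in Theorem~\ref{thm:main}; what remains is a short group-theoretic manipulation.

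For the ``only if'' direction, I would in fact prove the formally stronger statement that $G$ itself acts transitively on $\Hol(W,Y_{X,N})$. Indeed, if every parametrized point is standardizable, then for any $x_1,x_2\in\Hol(W,Y_{X,N})$ one may choose $\alpha_1,\alpha_2\in G$ with $\alpha_i\cdot x_i=\hat{x}$, whence $(\alpha_2^{-1}\alpha_1)\cdot x_1=x_2$. Passing to homotopy classes, transitivity of the induced $G/G_0$-action on $[\Hol(W,Y_{X,N})]$ follows \emph{a fortiori}.

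For the ``if'' direction, I would combine the assumed transitivity with Theorem~\ref{thm:main}. Given any $x\in\Hol(W,Y_{X,N})$, transitivity of the $G/G_0$-action on $[\Hol(W,Y_{X,N})]$ provides $\alpha\in G$ such that $\alpha\cdot x$ lies in the homotopy class of $\hat{x}$; since $\hat{x}$ is constant, this is precisely the statement that $\alpha\cdot x$ is null-homotopic. Theorem~\ref{thm:main} then produces $\beta\in G_0=(\Aut_W(X))^0$ that simultaneously standardizes $\alpha\cdot x$, i.e., $\beta\cdot(\alpha\cdot x)=\hat{x}$. Composing in the group $G$ gives $(\beta\circ\alpha)\cdot x=\hat{x}$, so $x$ is simultaneously standardizable by $\beta\circ\alpha\in G$. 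The only point where one could get tripped up is confusing the roles of $G$ and $G_0$ in the two directions, but since $\hat{x}$ is the constant map the ambiguity dissolves: being homotopic to $\hat{x}$ is the same as being null-homotopic, which is exactly the hypothesis required by Theorem~\ref{thm:main}.
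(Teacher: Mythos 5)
Your proof is correct and spells out exactly the short formal argument that the paper declares an ``immediate consequence'' of Theorem~\ref{thm:main}: the forward direction is the trivial observation that a transitive $G$-action descends to a transitive $G/G_0$-action on homotopy classes, and the backward direction uses transitivity to move $x$ into the class of the constant $\hat{x}$ (hence null-homotopic) and then applies Theorem~\ref{thm:main} to get a standardizing element of $G_0$. This is the same approach as the paper.
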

By Theorem~\ref{YisOka}, $Y_{X,N}$ is an Oka-Forstneri\v c manifold. 
Hence the Oka principle, or \emph{weak homotopy equivalence principle} (see e.g. \cite[5.4.8]{F}) applies: 
$[\Hol(W,Y_{X,N})]$ is isomorphic to the space of homotopy classes $[\Cont(W,Y_{X,N})]$ of continuous maps from $W$ to $Y_{X,N}$.
Thus we deduce:
\begin{corollary}
Any $x\in\Hol(W,Y_{X,N})$ is simultaneously standardizable  if and only if $G/G_0$ acts transitively on $[\Cont(W,Y_{X,N})]$.
\end{corollary}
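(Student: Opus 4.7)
The plan is to deduce this corollary from the previous one by transporting the transitivity statement along the bijection provided by the Oka principle. Since the previous corollary already equates simultaneous standardizability of every $x$ with the transitivity of the $G/G_0$-action on the holomorphic homotopy classes $[\Hol(W,Y_{X,N})]$, it will suffice to produce a $(G/G_0)$-equivariant bijection
\[
[\Hol(W,Y_{X,N})] \;\longleftrightarrow\; [\Cont(W,Y_{X,N})].
\]

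First I would invoke Theorem~\ref{YisOka}, which tells us that $Y_{X,N}$ is an Oka-Forstneri\v c manifold. Combined with $W$ being Stein, the weak homotopy equivalence principle (as cited, \cite[5.4.8]{F}) gives that the inclusion $\Hol(W,Y_{X,N})\hookrightarrow \Cont(W,Y_{X,N})$ is a weak homotopy equivalence, and in particular induces a bijection of path-connected components. This bijection is the one indicated in the text just before the statement; call it $\iota$.

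Next, one observes that the $G$-action $(\alpha\cdot x)(w)=(\oplus \alpha(w))\circ x(w)$ makes sense on $\Cont(W,Y_{X,N})$ as well, because an element of $G=\Hol(W,\Aut(X))$ is in particular a continuous assignment $w\mapsto \alpha(w)\in\Aut(X)$, and so acts on continuous maps $W\to Y_{X,N}$ by the same formula. The inclusion $\Hol(W,Y_{X,N})\hookrightarrow \Cont(W,Y_{X,N})$ is tautologically $G$-equivariant, so the induced bijection $\iota$ on homotopy classes is $G$-equivariant as well. Since $G_0$ acts trivially on both sides (path-connectedness of $G_0$ exhibits, for any $\alpha\in G_0$, a homotopy from $\alpha\cdot x$ to $x$), the action descends to a $(G/G_0)$-equivariant bijection.

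Finally, the conclusion is immediate: transitivity of the $G/G_0$-action is preserved by any $(G/G_0)$-equivariant bijection, so the $G/G_0$-action on $[\Hol(W,Y_{X,N})]$ is transitive if and only if the $G/G_0$-action on $[\Cont(W,Y_{X,N})]$ is transitive. Combining this equivalence with the previous corollary yields the statement. The only potentially delicate point is the verification that the Oka-theoretic bijection is genuinely $G$-equivariant, but this is automatic from the fact that both the action and the inclusion are defined by literally the same formula on representatives.
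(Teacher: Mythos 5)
Your proof is correct and follows essentially the same route as the paper: invoke Theorem~\ref{YisOka} so that the Oka weak homotopy equivalence principle identifies $[\Hol(W,Y_{X,N})]$ with $[\Cont(W,Y_{X,N})]$, and then transport the transitivity statement from the preceding corollary across this bijection. The only difference is that you spell out the $(G/G_0)$-equivariance of the bijection, which the paper leaves implicit in its terse ``Thus we deduce''; that added verification is correct and harmless.
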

Let us consider the special case $X=\C^n$, $n>1$. The group of holomorphic automorphisms $\Aut(\C^n)$ admits a strong deformation retract onto $\GL_n(\C)$. 
Therefore 
\begin{equation}\label{Oka1}
[\Hol(W,\Aut(\C^n))]\cong [\Hol(W,\GL_n(\C))]
\end{equation}
as well as
\begin{equation*}
[\Cont(W,\Aut(\C^n))]\cong [\Cont(W,\GL_n(\C))].
\end{equation*}
By the Oka principle (since $\GL_n(\C)$ is Oka-Forstneri\v c),
\begin{equation}\label{Oka3}
[\Hol(W,\GL_n(\C))]\cong[\Cont(W,\GL_n(\C))].
\end{equation}
As a consequence, the following purely topological characterization of simultaneous standardization can be deduced from our main theorem.
\begin{corollary}\label{hom-coro}
Any  $x\in\Hol(W,Y_{\C^n,N})$ is simultaneously standardizable if and only if $[\Cont(W,\GL_n(\C))]$ acts transitively on $[\Cont(W,Y_{\C^n,N})]$.
\end{corollary}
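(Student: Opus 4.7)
The plan is to deduce this statement from the preceding Corollary by applying the Oka principle to both the group that acts and the space being acted upon, and then checking that the chain of identifications respects the action. The content is largely bookkeeping once the correct functoriality is set up, and no fundamentally new analytic input is required beyond Theorem~\ref{YisOka}, the Oka principle for $\GL_n(\C)$, and the deformation retraction $\Aut(\C^n)\simeq \GL_n(\C)$.

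First I would invoke the preceding Corollary: $x$ is simultaneously standardizable if and only if $G/G_0\cong [\Hol(W,\Aut(\C^n))]$ acts transitively on $[\Hol(W,Y_{\C^n,N})]$. Next I would assemble the isomorphisms (\ref{Oka1}) and (\ref{Oka3}) into the chain
\[
[\Hol(W,\Aut(\C^n))]\;\cong\;[\Hol(W,\GL_n(\C))]\;\cong\;[\Cont(W,\GL_n(\C))],
\]
where the first isomorphism uses the strong deformation retract $\Aut(\C^n)\to \GL_n(\C)$ and the second is the Oka principle for the Oka--Forstneri\v c manifold $\GL_n(\C)$. In parallel, Theorem~\ref{YisOka} tells us that $Y_{\C^n,N}$ is Oka--Forstneri\v c, so the weak homotopy equivalence principle provides the identification
\[
[\Hol(W,Y_{\C^n,N})]\;\cong\;[\Cont(W,Y_{\C^n,N})].
\]

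The heart of the proof is then checking equivariance: the action of $[\Hol(W,\Aut(\C^n))]$ on $[\Hol(W,Y_{\C^n,N})]$ must transfer, under the bijections above, to the action of $[\Cont(W,\GL_n(\C))]$ on $[\Cont(W,Y_{\C^n,N})]$. I would argue this in three routine steps. First, the inclusion of holomorphic maps into continuous maps is clearly equivariant with respect to the two ``$(\alpha,x)\mapsto \oplus \alpha\circ x$'' actions, so passing to path components turns the Oka-principle bijection for $Y_{\C^n,N}$ into an equivariant bijection when the groups acting are interpreted as $\pi_0\Hol(W,\Aut(\C^n))$ and $\pi_0\Cont(W,\Aut(\C^n))$. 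Second, the deformation retraction $r\colon \Aut(\C^n)\to \GL_n(\C)$ is homotopic to the identity through a family of continuous self-maps of $\Aut(\C^n)$, so postcomposition by $r$ induces the identity on $[\Cont(W,\Aut(\C^n))]$; hence replacing $\Aut(\C^n)$ by $\GL_n(\C)$ throughout does not change the orbit structure on $[\Cont(W,Y_{\C^n,N})]$. Third, the Oka bijection $[\Hol(W,\GL_n(\C))]\cong [\Cont(W,\GL_n(\C))]$ is again induced by the inclusion and is therefore equivariant as well.

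Putting these three equivariant bijections together, transitivity of the action of $[\Hol(W,\Aut(\C^n))]$ on $[\Hol(W,Y_{\C^n,N})]$ is equivalent to transitivity of the action of $[\Cont(W,\GL_n(\C))]$ on $[\Cont(W,Y_{\C^n,N})]$, which combined with the preceding Corollary gives the desired statement. The main obstacle to watch out for is that the deformation retraction $\Aut(\C^n)\to \GL_n(\C)$ is only a retraction of topological spaces, not of groups, so one should phrase the equivariance in terms of the action on the target rather than as a homomorphism of groups; once done at the level of path components this is harmless.
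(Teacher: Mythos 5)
Your proof is correct and follows essentially the same route as the paper: start from the preceding corollary, then use the chain of isomorphisms coming from $Y_{\C^n,N}$ being Oka--Forstneri\v{c}, the strong deformation retraction $\Aut(\C^n)\to\GL_n(\C)$, and the Oka principle for $\GL_n(\C)$. The only difference is that you spell out the equivariance of these bijections explicitly (and correctly note that one should work through the inclusion $\GL_n(\C)\hookrightarrow\Aut(\C^n)$ rather than the retraction, since the latter is not a group homomorphism), whereas the paper treats this as obvious.
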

We also see from equations (\ref{Oka1}) to (\ref{Oka3}) that
\begin{equation}\label{A}
[\Cont(W,\Aut(\C^n))]\cong[\Hol(W,\Aut(\C^n))],
\end{equation}
which is a partial Oka principle of the infinite-dimensional manifold $\Aut(\C^n)$. We can ask the following question: is it true that for any Stein manifold $X$ with the density property, we have that
\[
[\Cont(W,\Aut(X))]\cong[\Hol(W,\Aut(X))]?
\]
In other words, is there an ``Oka theory'' for infinite-dimensional manifolds, and are the groups $\Aut(X)$ for $X$ a Stein manifold with the density property, Oka-Forstneri\v c manifolds in any such sense?
For a first study of  Oka properties of $\Aut(\C^n)$ and some of its subgroups, we refer the interested reader to \cite{infinite}.

Continuing with the case $X= \C^n$, we give another interpretation of our results which is a generalization of Grauert's Oka principle to principal bundles for certain infinite-dimensional  subgroups of $\Aut(\C^n)$.
First note that simultaneous  standardization is the same as lifting the map in the following diagram, where $(z^1,z^2, \dots , z^N)$  is a fixed $N$-tuple of points in $X=\C^n$:
\begin{diagram}
     &   & \Aut(\C^n)       & \alpha \\
       & \ruDashto & \dTo     &\dMapsto \\
 W  & \rTo^x & Y_{\C^n, N} & (\alpha (z^1), \alpha (z^2,)   \ldots , \alpha (z^N)) \\
\end{diagram}
Since $Y_{\C^n, N}$ is homogeneous  under $G=\Aut(\C^n)$, we can write it as $Y_{\C^n, N}=G/ H_{n,N}$, 
where $H_{n,N}$ is the (isotropy) subgroup of $G=\Aut(\C^n)$ fixing the $N$-tuple  $(z^1,z^2, \ldots,z^N)$ of points 
in $\C^n$. The above diagram in this notation becomes
\begin{diagram}
     &   & G       \\
       & \ruDashto & \dTo>\pi     \\
 W  & \rTo^x &  G/H_{n,N}   \\
\end{diagram}
where $\pi : G \to G/H_{n,N}$ is the  natural $H_{n,N}$-principal bundle. 
The existence of a  holomorphic (resp. continuous) lift in this diagram is equivalent to the fact that the pullback bundle $P_x$ with projection $x^* (\pi): x^* (G) \to W$ (which is an 
$H_{n,N}$-principal bundle over $W$) is holomorphically (resp. topologically) trivial. 
Suppose the bundle $P_x$ is topologically trivial: then there exists $\alpha_{cont} : W \to \Aut(\C^n)$ lifting $x$. By (\ref{A}) 
there is a holomorphic map $\alpha_{hol} : W \to \Aut(\C^n)$ homotopic to $\alpha_{cont}$. It follows that $\alpha_{hol}^{-1}\circ x : W \to Y_{\C^n, N}=G/ H_{n,N}$,
defined by
\[
w\mapsto\left((\alpha_{hol}^w)^{-1}\circ x^1(w),\dots,(\alpha_{hol}^w)^{-1}\circ x^N(w)\right),
\]
is null-homotopic,
and therefore lifts by Theorem \ref{thm:main}. This shows that $x$ lifts holomorphically, i.e., the bundle $P_x$ is holomorphically trivial.
We have then proven following version of Grauert's Oka principle for principal bundles under the groups $H_{n,N}$:
\begin{corollary}
For any holomorphic map $x : W \to Y_{\C^n, N}$ from any Stein manifold $W$, the $H_{n,N}$-principal bundle $P_x$, which is the pullback by $x$ of the canonical $H_{n,N}$-principal bundle $\pi : \Aut(\C^n)  \to \Aut(\C^n) / H_{n,N}$, 
is holomorphically trivial if and only if it is topologically trivial.
\end{corollary}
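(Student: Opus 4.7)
The plan is to translate trivialization of $P_x$ into the existence of a lift $W \to \Aut(\C^n)$ of $x$ along the quotient map $\pi:\Aut(\C^n)\to\Aut(\C^n)/H_{n,N}=Y_{\C^n,N}$, for which ``holomorphically trivial'' corresponds to a holomorphic lift and ``topologically trivial'' corresponds to a continuous lift. The ``only if'' direction is trivial since every holomorphic map is continuous. For the ``if'' direction, I would combine the partial Oka principle already derived, namely the isomorphism
\[
[\Cont(W,\Aut(\C^n))] \cong [\Hol(W,\Aut(\C^n))]
\]
from equation $(\ref{A})$, with the main theorem.

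More precisely, first I would fix a continuous lift $\alpha_{\mathrm{cont}}:W\to\Aut(\C^n)$ of $x$, provided by the assumed topological triviality of $P_x$. By $(\ref{A})$, there is a holomorphic map $\alpha_{\mathrm{hol}}:W\to\Aut(\C^n)$ that is homotopic (as a continuous map) to $\alpha_{\mathrm{cont}}$. Viewing $\alpha_{\mathrm{hol}}$ as an element of $\Aut_W(\C^n)$, I would then consider the holomorphic map
\[
y := \alpha_{\mathrm{hol}}^{-1}\cdot x : W \to Y_{\C^n,N},\qquad w\mapsto \bigl((\alpha_{\mathrm{hol}}^w)^{-1}(x^1(w)),\dots,(\alpha_{\mathrm{hol}}^w)^{-1}(x^N(w))\bigr).
\]
Because $\alpha_{\mathrm{hol}}$ is homotopic to $\alpha_{\mathrm{cont}}$, the map $y$ is homotopic to $\alpha_{\mathrm{cont}}^{-1}\cdot x$; but the latter is the constant map to $(z^1,\dots,z^N)$, since $\alpha_{\mathrm{cont}}$ was a lift of $x$. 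Hence $y$ is null-homotopic.

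Now I would invoke Theorem~\ref{thm:main}: since $\C^n$ has the density property, $y$ being null-homotopic implies that it is simultaneously standardizable by some $\beta\in(\Aut_W(\C^n))^0$, i.e.\ $(\oplus\beta^w)(y(w))=(z^1,\dots,z^N)$ for every $w\in W$. The composition $\alpha_{\mathrm{hol}}\circ\beta^{-1}:W\to\Aut(\C^n)$ is then a holomorphic lift of $x$ through $\pi$, which trivializes $P_x$ holomorphically. I do not foresee any genuine obstacle beyond verifying these compatibilities, since the substance is already contained in Theorem~\ref{thm:main} and in the chain of identifications $(\ref{Oka1})$--$(\ref{Oka3})$ yielding $(\ref{A})$; the mild subtlety is making sure that the homotopy class of a \emph{continuous} lift suffices to produce, via the Oka principle for $\GL_n(\C)$, a \emph{holomorphic} lift differing from $x$ by a null-homotopic map to $Y_{\C^n,N}$, which is precisely what the argument above arranges.
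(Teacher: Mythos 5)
Your proof is correct and follows essentially the same route as the paper: take a continuous lift $\alpha_{\mathrm{cont}}$ of $x$ from topological triviality, replace it by a homotopic holomorphic $\alpha_{\mathrm{hol}}$ via the partial Oka principle (\ref{A}), observe that $\alpha_{\mathrm{hol}}^{-1}\cdot x$ is null-homotopic because it is homotopic to the constant $\alpha_{\mathrm{cont}}^{-1}\cdot x$, and then apply Theorem~\ref{thm:main} to produce a holomorphic lift. The only (harmless) elaboration beyond the paper's argument is your explicit identification of $\alpha_{\mathrm{hol}}\circ\beta^{-1}$ as the holomorphic lift, which the paper leaves implicit.
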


We end this section with two examples. 
The first shows the difference between simultaneous standardization using automorphisms in the path-connected component of the identity $(\Aut_W(X))^0$ and 
using the whole group $\Hol(W,\Aut(X))$. In this  example the map $x\in\Hol(W,Y_{X,N})$ is not null-homotopic, so the standardization cannot be achieved by automorphisms in $(\Aut_W(X))^0$;
however standardization is possible by elements in $\Hol(W,\Aut(X))$.

The second is an example where the topological obstruction from Corollary \ref{hom-coro} does  prevent from simultaneous standardization, i.e., 
in this example $[\Cont(W,\GL_n(\C))]$ does not act transitively on $[\Cont(W,Y_{\C^n,N})]$.
\begin{example}
Let $W$ be any Stein manifold. Then any $x\in\Hol(W,Y_{\C^2,2})$ is simultaneously standardizable. 
\end{example}
\begin{proof}
Let
\begin{equation*}
x=(x_1(w),x_2(w))=\left(
\binom{z_1(w)}{\eta_1(w)},\binom{z_2(w)}{\eta_2(w)}
\right),
\end{equation*}
and define
\begin{align*}
\alpha_1^w(z,\eta)&=(z-z_1(w),\eta-\eta_1(w))\\
\alpha_2^w(z,\eta)&=\left( \begin{array}{cc}
                           z_2(w)-z_1(w) & f(w)\\
			   \eta_2(w)-\eta_1(w) & g(w)
                          \end{array}
  \right)\cdot \left(\begin{array}{c}
                z\\
		\eta
               \end{array} \right)
\end{align*}
Observe that $z_2(w)-z_1(w)$ and $\eta_2(w)-\eta_1(w)$ have no common zeros. Hence, since $W$ is Stein, Cartan's theorem B implies that there are $f,g\in\hol(W)$ such that
\[
\left( \begin{array}{cc}
                           z_2(w)-z_1(w) & f(w)\\
			   \eta_2(w)-\eta_1(w) & g(w)
                          \end{array}
  \right)\in\SL_2(\C).
\]
Hence $\alpha_1,\alpha_2\in\Aut_W(\C^2)$ and $(\alpha_2^{-1})^w\circ \alpha_1^w$ maps $x_1(w)$ to $(0,0)$ and $x_2(w)$ to $(1,0)$, which gives the simultaneous standardization. 
\end{proof}
As a consequence, by Corollary~\ref{hom-coro}, $[\Cont(W,\GL_2(\C))]$ acts transitively on $[\Cont(W,Y_{\C^2,2})]$.
In order to find an example of this form where standardization by elements of $(\Aut_W(X))^0$ is not possible, consider the special case $W=\SL_2(\C)$. 
Then there exists a non null-homotopic $x\in\Hol(\SL_2(\C),Y_{\C^2,2})$ 
which can be standardized with an element not in $(\Hol(\SL_2(\C),\Aut(\C^2)))^0$.
Indeed, the holomorphic map $\SL_2(\C)\to Y_{\C^2,2}$ given by
\[
A\mapsto \left( A\binom{1}{0},\binom{0}{0} \right)
\]
induces the identity mapping on the $3$-sphere (by projection to the first factor of $Y_{\C^2,2} $), so is not a null-homotopic map. 

\begin{example}
Let $W$ be a small (so that the map below gives pairwise different points) Grauert tube around $\SU_2$, i.e., a Stein neighborhood of $\SU_2$ in $\SL_2(\C)$ which contracts onto the $3$-sphere $\SU_2$. Then  $x\in\Hol(W,Y_{\C^2,3})$ defined by 
\[
A\mapsto \left( A\binom{1}{0},\binom{0}{0}, \binom{2}{0} \right)
\]
is not  simultaneously standardizable. 
\end{example}
\begin{proof} 
Consider the  map $\phi : Y_{\C^2,3} \to S^3 \times S^3$ given by 
\[(x_1, x_2, x_3) \mapsto (\frac{x_2-x_3} {\vert x_2-x_3\vert } ,\frac{ x_1-x_2} {\vert x_1-x_2\vert})\]
Since $W$ contracts to $\SU_2 \cong S^3$ the composition $\phi \circ x: W \to S^3 \times S^3$  gives  a map from $S^3 \to S^3 \times S^3$.  It has bidegree $(0, 1)$ and applying  any element in $ [\Hol(W,\Aut(\C^2))] \cong [\Cont(W,\GL_2(\C))]$ to it,  changes both degrees by the same amount, so the corresponding bidegree will never be $(0,0)$. Therefore no application of an element in $ [\Hol(W,\Aut(\C^2))]$ to $x$ can lead to a null-homotopic map.
\end{proof}

\end{document}